\newcommand{\N}{{\mathbb N}}
\newcommand{\R}{\mathbb{R}}
\newcommand{\Q}{\mathbb{Q}}
\newcommand{\supp}{\text{supp}}
\newcommand{\x}{\textbf{x}}
\newtheorem*{theorem-non}{Theorem}
\newcounter{repc}
\newtheorem*{pr}{Proposition \ref{prop:P3DMalt}}
\newtheorem{cor2}{Corollary}
\newtheorem*{lm2}{Lemma \ref{lm:mintrip}}
\newtheorem*{thm2}{Theorem \ref{thm:baryvalue}}
\begin{document}

\title{On the Computational Complexity of \\ Finding a Sparse Wasserstein Barycenter}

\author{Steffen Borgwardt\inst{1} \and Stephan Patterson\inst{2}}

\institute{\email{\href{mailto:steffen.borgwardt@ucdenver.edu}{steffen.borgwardt@ucdenver.edu}};
University of Colorado Denver
\and \email{\href{mailto:stephan.patterson@ucdenver.edu}{stephan.patterson@ucdenver.edu}}; University of Colorado Denver
}

\date{}

\maketitle

\begin{abstract}
The discrete Wasserstein barycenter problem is a minimum-cost mass transport problem for a set of probability measures with finite support. In this paper, we show that finding a barycenter of sparse support is hard, even in dimension $2$ and for only $3$ measures. We prove this claim by showing that a special case of an intimately related decision problem SCMP -- does there exist a measure with a non-mass-splitting transport cost and support size below prescribed bounds? -- is NP-hard for all rational data. Our proof is based on a reduction from planar 3-dimensional matching and follows a strategy laid out by Spieksma and Woeginger (1996) for a reduction to planar, minimum circumference 3-dimensional matching. While we closely mirror the actual steps of their proof, the arguments themselves differ fundamentally due to the complex nature of the discrete barycenter problem. Containment of SCMP in NP will remain open. We prove that, for a given measure, sparsity and cost of an optimal transport to a set of measures can be verified in polynomial time in the size of a bit encoding of the measure. However, the encoding size of a barycenter may be exponential in the encoding size of the underlying measures.


\end{abstract}

\section*{Declarations}

\noindent\textbf{Funding.  }
S. Borgwardt gratefully acknowledges support through the Collaboration Grant for Mathematicians  {\em Polyhedral Theory in Data Analytics} of the Simons Foundation.

\section{Introduction}

Optimal transport problems with multiple marginals arise in a variety of fields and applications. The Wasserstein distance is used as the metric for the transport distance between marginals due to its beneficial properties, including that the resulting solutions, called {\em barycenters}, maintain the geometric structure of the original measures. For two continuous measures $\mu$ and $\nu$ with support in $\R^d$, the Wasserstein distance is given by \[ W_2(\mu,\nu) = \inf \Big \{ \Big(\int_{\R^d \times \R^d} \|x-y\|^2 d\gamma(x,y)\Big)^{\frac{1}{2}}, \gamma \in \Pi(\mu,\nu) \Big \},\]
where $\Pi(\mu,\nu)$ denotes the set of all measures on $\R^d \times \R^d$ with $\mu, \nu$ as marginals. For further details on the Wasserstein distance and its uses, we refer the reader to \cite{pz-19,v-09}. 

The computation of barycenters for continuous probability measures has proven challenging, in part due to the prohibitive cost of calculating the Wasserstein distance. For measures with finite support, the computational complexity of the problem is open, and significant effort continues on efficient exact, approximate, and heuristic solution strategies. In this paper, we address the complexity of finding barycenters with small support sets, called {\em sparse barycenters.} We formally address the distinction between dense and sparse barycenters in Section \ref{subsec:sparse}.

Exact, sparse barycenters can be computed through exponential-sized linear programs \cite{abm-16}. By contrast, most approximation strategies for the discrete barycenter problem, including the active field of entropy-regularized problems \cite{bccnp-14,cd-14,twk-18,ylst-19}, produce approximate measures with {\em dense} support. These methods are popular because they allow for large-scale computations.
However, sparse solutions are desirable for many applications; for instance, the low number of chosen locations is a beneficial trait of a barycenter in facility location problems such as in \cite{abm-16}.  

\subsection{Discrete Barycenters}
Formally, the discrete barycenter problem is defined as follows. Given a finite number $m$ of probability measures $P_1,\ldots, P_m$, each with a finite set of support points $\supp(P_i)$ in $\R^d$, and a corresponding set of positive weights $\lambda_1, \ldots, \lambda_m$ such that $\sum_{i=1}^m \lambda_i = 1$, a Wasserstein barycenter is a probability measure $\bar P$ on $\R^d$ satisfying
\begin{equation*}
\phi(\bar P):=\sum\limits_{i=1}^m \lambda_i W_2(\bar P, P_i)^2 = \inf\limits_{P\in \mathcal{P}^2(\R^d)} \sum\limits_{i=1}^m \lambda_i W_2(P,P_i)^2,
\end{equation*}
where $W_2$ is the quadratic Wasserstein distance and $\mathcal{P}^2(\R^d)$ is the set of all probability measures on $\R^d$ with finite second moments. The function $\phi(P)=\sum_{i=1}^m \lambda_i W_2(P, P_i)^2$ denotes the cost of an {\em optimal transport} of a measure $P$ to measures $P_1,\dots,P_m$. Essentially, a barycenter is a measure $\bar P$ with an associated transport of minimum cost.

When the measures $P_1, \ldots, P_m$ have a finite number of support points, we call them {\em discrete}. When the measures $P_1, \ldots, P_m$ are discrete, the Wasserstein distance is the aggregated squared Euclidean distance, a minimum instead of an infimum can be used, there may exist multiple barycenters, and all barycenters $\bar P$ are discrete \cite{abm-16}. 

We formally state the search for a barycenter of a discrete set of measures as follows, with the additional assumption that the support points in $\supp(P_i), i = 1, \ldots, m$, and their corresponding masses, as well as the weight vector $\lambda$, are rational. The set of rational numbers $\Q$ is an appropriate choice to represent arithmetic computations.
\\\\
\noindent{\bf Discrete Barycenter Problem}
 
\noindent \textbf{Input:} Discrete probability measures $P_1, \ldots, P_m$ in $\Q^d$, weight vector $\lambda\in \Q^m_{+}$

\noindent \textbf{Output:} Discrete barycenter $\bar P$ for $P_1, \ldots, P_m$ and $\lambda$.
\\\\
Computing the transport cost $\phi(\bar P)$ of a barycenter $\bar P$ requires not only the support points and their masses, but also a {\em transport plan}: the support points in each $P_1, \ldots, P_m$ to which each support point in $\bar P$ transports mass, and the amount of mass transported. 

All discrete barycenters satisfy a {\em non-mass-splitting property}: each support point of a barycenter transports its full mass to exactly one point in each input measure $P_i$, $i = 1, \ldots, m$. Thus, the transport plan of a barycenter consists of exactly one point in each measure for each barycenter support point, with associated mass equal to the mass of the barycenter support point. Therefore a barycenter can be described in terms of its transport plan; that is, a barycenter is an assignment of appropriate mass $w_j$ to a subset of the {\em combinations} or {\em tuples} of input support points, denoted $S^* =  \{ (\x_{1}, \ldots, \x_{m}) : \x_{i} \in \supp(P_i)\}$, with elements $s_j = (\x_1^j, \x_2^j, \ldots, \x_m^j)$, $j = 1,\ldots, |S^*|$. 

A consequence of measuring transport costs through the squared Euclidean distance is that the possible locations of barycenter support points are given by the optimal locations for placing mass for each of the tuples. That is, when mass is associated to a tuple $s_j$, the barycenter support point for the mass is given by the {\em weighted mean} $\x^j = \sum_{i=1}^m \lambda_i \x_i^j$. The {\em set $S$ of unique weighted means} contains all possible support points for a barycenter. Note that since the support points of $P_1, \ldots, P_m$ are rational by assumption, so too are the elements of $S$ and the support points of $\bar P$. 

Any measure $P$ with a non-mass-splitting transport plan to $P_1, \ldots, P_m$, can be described as a set of tuples from $S^*$ and corresponding masses; however, the transport cost associated with a tuple depends on the chosen location for mass assignment. Since the optimal location for placing mass for minimizing the transport to a given tuple is the weighted mean, we define a {\em combination measure} $P^*$ to be a measure that is provided as a set of tuples and whose support points are weighted means that can be calculated from the tuples. 

We now have two ways to represent a barycenter. A measure denoted as $P$ is given in the traditional manner, namely, a set of support points from $S$ and corresponding masses. A combination measure denoted as $P^*$ is given as a set of tuples from $S^*$ and corresponding masses. Since all barycenters have non-mass-splitting optimal transport plans, the set $\mathcal{C}$ of all combination measures includes all barycenters, and so we may restrict the search for a barycenter to $\mathcal{C}$. Note this is not a restriction on $S$, the possible support points for $\bar P$, but rather a restriction on the possible transport plans. 

A measure $P^*$ provides a natural, implicit transport plan, while a measure in the form $P$ does not.  In Section \ref{sec:open}, we examine the transformation between these two representations for barycenters and combination measures. In this paper, we primarily work with barycenters provided in the form $P^*$.

 

\subsection{Sparse Barycenters}\label{subsec:sparse}
We consider a variant of the discrete barycenter problem with a requirement on the size of the set $\supp(\bar P)$. Letting $|P_i|$ denote the size of $\supp{(P_i)}$, it is known that there always exists a barycenter $\bar P$ with a support set of size at most \begin{equation*} |\bar P| \leq \sum_{i=1}^m |P_i| -m+1. \end{equation*} Furthermore, a barycenter with such sparse support can be computed by finding an optimal vertex for a linear program \cite{abm-16}. As mentioned previously, sparse support is beneficial in many applications, and indeed this bound is very small compared to the number of possible tuples, $|S^*|$, which is exponential in the number of input measures $m$.

We formally define a variant of the discrete barycenter problem in which we prescribe an upper bound $N$ on the number of support points `allowed' for a barycenter as follows. We always assume $N \leq \sum_{i=1}^m |P_i| -m+1$, so that any measure which satisfies the requirements of SBP is at least as sparse as one guaranteed to exist. 
\\\\
\noindent{\bf Sparse Barycenter Problem [SBP]}
 
\noindent \textbf{Input:} Discrete probability measures $P_1, \ldots, P_m$ in $\Q^d$, weight vector $\lambda\in \Q^m_{+}$, support size bound $N\in \N$

\noindent \textbf{Output:} Discrete barycenter $\bar P$ for $P_1, \ldots, P_m$ and $\lambda$, with a support set $\supp(\bar P)$ of size at most $N$ (or decision that none exists)
\\\\
We assume that the size of a bit encoding of SBP (and the other problems throughout this paper) is dominated by the input of the measures $P_1, \ldots, P_m$, i.e., a bit encoding of masses and the coordinates of support points. Note that an encoding has to contain at least one bit for each support point in each measure and thus its size is larger than the $N$ we are interested in.

A solution to SBP can be found through an examination of the underlying polyhedron, given by the mass transport requirements of $P_1, \ldots, P_m$ \cite{abm-16}. An inclusion-minimal support set occurs at a vertex of the polyhedron. Thus in principle, SBP could be solved through an enumeration of the vertices of the optimal face, checking for sufficiently small support. However, the associated linear program may be exponentially-sized, and the number of vertices of the face may be exponential, so one obtains no immediate information about the hardness of SBP from these observations. 
 
 \subsection{Outline of the Paper}
 
In Section \ref{sec:proofstrat}, we state our main results regarding the difficulty of finding barycenters with small support sets. We begin in Section \ref{subsec:results} with a definition of a decision problem SCMP on the existence of a combination measure with bounds on the size of the support set and the transport cost. We show this problem is NP-hard through a reduction from a well-known NP-hard problem; the decision problem and the strategy for the reduction are outlined in Section \ref{subsec:proofstrat}. This result also implies that an efficient algorithm for SBP cannot exist, unless P = NP. Our reduction depends heavily on a particular type of planar graph, which we describe in Section \ref{sec:G2}. Section \ref{sec:HardProof} contains the details of the proofs for the reduction, as stated in Section \ref{sec:proofstrat}.

In Section \ref{sec:open}, we discuss why our approach does not readily provide an answer to containment in NP, respectively the efficient verification of some of the properties of a given measure. In Section \ref{subsec:SCMPNP}, we turn to SCMP; in Section \ref{subsec:SBPver}, we turn to SBP. 
Finally, we conclude with a brief outline of further open questions on the complexity of the discrete barycenter problem in Section \ref{sec:openQ}. 
 

\section{Results and the Strategy of the Proof}\label{sec:proofstrat}

Our result that SBP cannot be solved efficiently (unless $P = NP$) follows from a reduction of a well-known NP-hard problem to a decision variant of SBP. We begin with two problem definitions: first, a decision problem that is more general than the sparse barycenter problem; then, a special case of this problem used in our reduction. Both are shown to be NP-hard in this paper.


\subsection{Results}\label{subsec:results}

For a decision variant of SBP, we replace the output of SBP -- a sparse barycenter -- with a question regarding the existence of a sparse measure with a non-mass-splitting transport of cost below a prescribed bound. Recall that barycenters satisfy the non-mass-splitting property and therefore can be given as a combination measure. Therefore, the search space for a barycenter may be restricted to the set $\mathcal{C}$ of measures for which a non-mass-splitting transport exists and whose support points are the weighted mean of the destination points. 
\\\\\\
\noindent{\bf Sparse Combination Measure Problem [SCMP]}
 
\noindent \textbf{Input:} Discrete probability measures $P_1, \ldots, P_m$ in $\Q^d$, weight vector $\lambda\in \Q^m_{+}$, support size bound $N\in \N$, transport cost bound $\Phi$

\noindent \textbf{Decide:} Does there exist a combination measure $P^*$ with a support set $\supp(P^*)$ of size at most $N$ and transport cost $\phi(P^*)\leq \Phi$?
\\\\
A measure $P^*$ which satisfies the requirements of SCMP is an approximation of a barycenter with the beneficial non-mass-splitting and sparsity properties of an exact barycenter. 

We show that SCMP is NP-hard for $m \geq 3$ and $d \geq 2$. Our proof that SCMP is NP-hard follows from a reduction to a heavily restricted set of SCMP instances, in which
\begin{itemize}
\item there are exactly three measures $P_1,P_2,P_3$ in the Euclidean plane, i.e., $m=3$ and $d=2$,
\item all three measures have the same number of support points, $|P_1| = |P_2| = |P_3| = n$,
\item every measure has evenly distributed mass, i.e., all support points have mass $1/n$,
\item and the weights of the measures $\lambda_i$ are equal, so that $\lambda_i = 1/3$ for $i = 1, 2, 3$.
\end{itemize}

We call SCMP, restricted to such instances, the Uniform Combination 3-Measure Problem. \\

\noindent{\bf Uniform Combination 3-Measure Problem [UC3P]}

\noindent \textbf{Input:} Discrete probability measures $P_1, P_2,$ and $P_3$, equally weighted with $\lambda_i = 1/3$, each with $n$ support points in the Euclidean plane and uniformly distributed mass, support size bound $N\in \N$, transport cost bound $\Phi$

\noindent \textbf{Decide:} Does there exist a combination measure $P^*$ with a support set  $\supp(P^*)$ of size at most $N$ and transport cost $\phi(P)\leq \Phi$?
\\\\
Despite the various restrictions to the input, UC3P is still NP-hard.
\begin{theorem}\label{thm:UC3P}
UC3P is NP-hard.
\end{theorem}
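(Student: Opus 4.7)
The plan is to reduce from planar 3-dimensional matching (P3DM), a problem known to be NP-hard, mirroring the high-level scheme of Spieksma-Woeginger. Given a P3DM instance with pairwise disjoint sets $X, Y, Z$ of size $n$ and a triple set $T \subseteq X \times Y \times Z$ whose incidence hypergraph is planar, I build a UC3P instance on three measures $P_1, P_2, P_3$ in $\R^2$, each with $n$ support points of uniform mass $1/n$ and weight $\lambda_i = 1/3$. The elements of $X$ (resp. $Y$, $Z$) become the support points of $P_1$ (resp. $P_2$, $P_3$), and the support-size and cost bounds are set to $N = n$ and to a $\Phi$ calibrated below.

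The geometric heart of the construction uses the specific planar graph $G$ developed in Section~\ref{sec:G2}: the $3n$ points are placed in the plane so that every triple $s \in T$ realizes a triangle whose sum of squared pairwise side lengths lies below some threshold $\tau$, while every tuple in $(X \times Y \times Z) \setminus T$ realizes a triangle whose sum of squared side lengths exceeds $\tau$ by a strictly positive, controllable margin. Since for three points in $\R^2$ one has $\sum_{i=1}^{3}\|\bar\x - \x_i\|^2 = \tfrac{1}{3}\sum_{i<j} \|\x_i-\x_j\|^2$, this directly controls the per-tuple transport cost $\tfrac{1}{3}\sum_i \|\bar\x - \x_i\|^2$ of a combination measure via Section~\ref{sec:G2}'s layout of $G$.

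Because each $P_i$ has $n$ support points of mass $1/n$, any non-mass-splitting combination measure $P^*$ with $|\supp(P^*)| \leq N = n$ must consist of exactly $n$ tuples, each with mass $1/n$, such that every point of every $P_i$ appears in exactly one selected tuple — i.e., the chosen tuples form a perfect matching on $X \cup Y \cup Z$. I then set $\Phi$ to be $n \cdot \tfrac{1}{n} \cdot \tfrac{1}{3} \cdot \tau$ (i.e., the maximal cost attainable by a perfect matching drawn entirely from $T$), so that a YES answer to P3DM produces, via the matching triples, a combination measure of cost at most $\Phi$; conversely, any combination measure satisfying the UC3P bounds must both induce a perfect matching on elements and consist solely of tuples from $T$, yielding a YES answer to P3DM. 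This equivalence, combined with the polynomial-time construction, gives the reduction.

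The main obstacle is engineering the embedding so that the cost gap between in-$T$ and out-of-$T$ tuples is provably uniform and large enough to make the threshold $\tau$ work, while simultaneously (i) keeping coordinates rational and polynomially bit-sized, (ii) preserving the planarity of the instance through $G$, and (iii) ruling out ``mixed'' combination measures that use fewer than $n$ tuples with larger masses or that pay a low cost on a clever, non-matching selection. Addressing (iii) is subtle because UC3P allows any tuple in $S^*$, not merely those with a prescribed geometric role; the squared-Euclidean structure and the precise side-length estimates from the $G$-based layout are what make the gap argument go through, and this is where the proof diverges fundamentally from the min-circumference arguments of Spieksma and Woeginger.
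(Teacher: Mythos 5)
Your high-level frame is right---reduce from P3DM, exploit the uniform masses and the bound $N=n$ to force the selected tuples into a perfect matching, and use the identity relating the centroid cost to pairwise squared distances---but the step you defer to ``the specific planar graph of Section~\ref{sec:G2}'' is not the step that graph actually performs, and it is exactly where the proof lives. You assume the elements of $X\cup Y\cup Z$ can themselves be embedded so that every triple in $T$ is cheap (below $\tau$) and every tuple outside $T$ is expensive, with a uniform margin. No such direct embedding is constructed in the paper, and none is available in general: an element can occur in up to three triples of $T$ whose partners lie far apart in the rectilinear layout, so one cannot make all in-$T$ triangles uniformly small while keeping all out-of-$T$ triangles uniformly large. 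This impossibility is precisely why the construction introduces a large number of auxiliary support points: each measure $P_i$ consists of \emph{all} vertices of $G_{\text{TP}}$ assigned to it---element points, triple-triangle vertices, and the many vertices of the triangle paths---so $n$ is the total number of 3-4-5 triangles in $G_{\text{TP}}$, not $|X|$, and the support points of $P_1,P_2,P_3$ are not in bijection with $X,Y,Z$.

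Consequently the correctness argument cannot be a per-tuple threshold over $X\times Y\times Z$. In the paper the cheap tuples are exactly the 3-4-5 triangles (cost $50/9$, Lemma~\ref{lm:mintrip}), most of which are internal to triangle paths and carry no meaning in P3DM; the equivalence with P3DM is obtained combinatorially, by showing that any combination measure with $n$ support points and transport cost $50/9$ must select triangles in an alternating pattern that propagates along each path (Theorem~\ref{thm:sparsealt}), which in turn encodes a choice of one triple triangle per element point, i.e.\ a matching (Proposition~\ref{prop:P3DMalt}). Your own closing paragraph flags obstacle (iii)---ruling out clever non-matching selections---but the resolution is not a gap argument on the original elements; it is the minimum-cost-triple analysis together with the path-propagation argument, and without these the reduction as you state it does not go through.
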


In Section \ref{subsec:proofstrat}, we explain the strategy to prove Theorem \ref{thm:UC3P} -- the actual work is done in Sections \ref{sec:G2} and \ref{sec:HardProof}. NP-hardness of UC3P immediately implies NP-hardness of SCMP for $m=3$ and $d=2$. A generalization to any $m\geq 3$ and $d\geq 2$ is trivial -- one can include additional measures that have a single support point, respectively lift the coordinates to an affine subspace of a higher-dimensional space. 

\begin{theorem}\label{thm:SCMPNP}
SCMP is NP-hard for $m \geq 3$ and $d \geq 2$.
\end{theorem}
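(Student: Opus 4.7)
The plan is to reduce from UC3P, which is NP-hard by Theorem \ref{thm:UC3P}, to SCMP with any fixed $m \geq 3$ and $d \geq 2$. I would apply two independent transformations to a UC3P instance $(P_1,P_2,P_3,N,\Phi)$ to obtain an equivalent SCMP instance.

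For the dimension, I would use the isometric embedding $\R^2 \hookrightarrow \R^d$ given by $\x \mapsto (\x, 0, \ldots, 0)$, applied to the support of each input measure. This preserves squared Euclidean distances exactly, so weighted means, transport costs, and the combinatorial structure of combination measures together with their support sizes are all preserved; $N$ and $\Phi$ carry over unchanged. For the number of measures, I would adjoin $m-3$ auxiliary measures $P_4, \ldots, P_m$, each consisting of a single support point at the origin with mass $1$, and set weights $\lambda_i = \mu$ for $i \geq 4$ and $\lambda_1 = \lambda_2 = \lambda_3 = (1-(m-3)\mu)/3$ for a positive rational $\mu$ to be fixed. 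Since every auxiliary measure has a single support point, all tuples in the new SCMP instance agree in their last $m-3$ coordinates, and combination measures in the SCMP instance are in bijection with those of the UC3P instance, preserving both masses and support sizes (the SCMP weighted mean of each tuple is a fixed scalar multiple of the corresponding UC3P weighted mean).

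The remaining step is to fix $\mu$ and a modified cost bound $\Phi'$ so that the SCMP answer matches the UC3P answer. Writing out the SCMP cost $\phi_\mu(\bar{P}^*)$ as a polynomial in $\mu$, one sees that it differs from the UC3P cost of the corresponding combination measure by a term bounded by $C \mu$ uniformly, where $C$ is polynomially bounded in the diameter of the instance. The set of UC3P costs attainable by non-mass-splitting combination measures with at most $N$ support points is a finite collection of rationals, each being the optimum of an LP over non-negative masses on a fixed selection of at most $N$ tuples subject to the marginal constraints of $P_1, P_2, P_3$. By standard LP bit-size bounds, these rationals have polynomial bit-length, so there is a gap $\gamma > 0$ of polynomial bit-length separating $\Phi$ from the next larger attainable UC3P cost. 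Picking $\mu < \gamma/(2C)$ and setting $\Phi' = \Phi + \gamma/2$ then makes the two decision problems equivalent.

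The main (and modest) obstacle is the bookkeeping needed to verify that $\mu$ and $\Phi'$ have polynomial bit-length encodings and that the reduction runs in polynomial time. This is routine from the LP-based description of attainable costs, consistent with the authors' description of the generalization as trivial.
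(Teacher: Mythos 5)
Your construction is the one the paper intends: reduce from UC3P (Theorem \ref{thm:UC3P}), lift the plane isometrically into $\R^d$, and adjoin $m-3$ singleton measures. The paper dismisses this generalization as trivial and gives no details, so your write-up is an elaboration of the same route rather than a different one, and it is correct; however, the perturbation-plus-gap argument in your third paragraph is heavier machinery than the situation requires. Using the pairwise form of the cost, Equation \ref{eqn:newcost}, the contribution of the singleton measures at the origin to $c(s_j)$ is $\sum_{i\le 3}\sum_{k\ge 4}\lambda_i\lambda_k\|\x_i^j\|^2$, and when you sum this over the tuples of any feasible combination measure weighted by its masses $w_j$, the marginal constraints force $\sum_j w_j\|\x_i^j\|^2=\sum_{\x\in\supp(P_i)}P_i(\x)\,\|\x\|^2$, a constant depending only on the input. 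Hence $\phi_{\mathrm{SCMP}}(P^*)=(1-(m-3)\mu)^2\,\phi_{\mathrm{UC3P}}(P^*)+K$ for a constant $K$ computable in polynomial time, i.e.\ an exact affine bijection of attainable costs that also preserves support sizes (as you note, weighted means are rescaled by the nonzero factor $1-(m-3)\mu$). Setting $\Phi'=(1-(m-3)\mu)^2\Phi+K$ for any admissible fixed rational $\mu$ then makes the two decisions coincide, with no appeal to LP bit-size bounds or to a separating gap $\gamma$. Your gap argument does go through --- the relevant quantity is the minimum attainable cost, a single LP optimum of polynomial bit-length, so a computable lower bound on $\gamma$ exists --- but it is avoidable, and avoiding it also sidesteps the slight imprecision in describing the attainable costs as a finite set of rationals when they in fact form a finite union of intervals.
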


An argument that an efficient algorithm for SBP must not exist follows from the NP-hardness of UC3P. Suppose an efficient algorithm for SBP exists for all $m$ and sparsities $N$. Then when $m=3$, a decision for UC3P with sparsity $N$ and $\Phi$ equal to the transport value of a barycenter would be efficient. As is shown in Section \ref{sec:HardProof}, this would lead to an efficient decision of a well-known NP-hard problem, a contradiction.

\begin{theorem}\label{thm:SBPNP}
For $m\geq 3$ and $d \geq 2$, an efficient algorithm for SBP cannot exist, unless P = NP. 
\end{theorem}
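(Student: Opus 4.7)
The plan is a polynomial-time Turing reduction from UC3P to SBP, so that any efficient algorithm for SBP combined with Theorem~\ref{thm:UC3P} would force $P = NP$. The bridge is the observation, foreshadowed in the paragraph following Theorem~\ref{thm:SCMPNP}, that the UC3P instances produced by the reduction underlying Theorem~\ref{thm:UC3P} (and detailed in Section~\ref{sec:HardProof}) come equipped with a cost bound $\Phi$ equal to the optimal transport value $\phi^{*} := \min_{P} \phi(P)$ of the constructed measures -- that is, $\Phi$ is precisely the cost of any discrete barycenter of $P_1,P_2,P_3$. Hence I would first verify this identity directly from the gadget construction, and then build the reduction on top of the resulting restricted family of UC3P instances.

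The reduction is then a one-shot call: given such an instance $(P_1,P_2,P_3,N,\Phi)$, invoke the hypothetical efficient SBP algorithm on $(P_1,P_2,P_3,\lambda,N)$ with $\lambda = (1/3,1/3,1/3)$. If SBP returns a barycenter $\bar P$, then $\bar P \in \mathcal{C}$ by the non-mass-splitting property recalled in Section~\ref{subsec:results}, has $|\supp(\bar P)| \le N$, and satisfies $\phi(\bar P) = \phi^{*} = \Phi$, so UC3P answers YES. If instead SBP declares that no barycenter of support size $\le N$ exists, any hypothetical UC3P witness $P^{*}$ with $|\supp(P^{*})|\le N$ and $\phi(P^{*})\le \Phi = \phi^{*}$ must satisfy $\phi(P^{*}) = \phi^{*}$, since $\phi^{*}$ is the global minimum over all probability measures; but this would make $P^{*}$ itself a barycenter of support size $\le N$, contradicting the SBP output, so UC3P answers NO. The total overhead beyond the SBP call is $O(1)$. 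The extension from $m=3,d=2$ to arbitrary $m \ge 3$ and $d \ge 2$ is carried out exactly as in the passage preceding Theorem~\ref{thm:SCMPNP}: pad with single-support-point measures, respectively embed the plane in an affine subspace of $\R^d$; neither perturbation changes the barycenter structure or its cost.

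The main obstacle is securing the identity $\Phi = \phi^{*}$ on the instances actually produced in Section~\ref{sec:HardProof}. If the construction only yields a $\Phi$ that upper-bounds $\phi^{*}$, the direction ``SBP says NO $\Rightarrow$ UC3P says NO'' collapses, because a combination measure of cost strictly between $\phi^{*}$ and $\Phi$ could still witness YES for UC3P while every \emph{optimal} barycenter is too dense. Resolving this comes down to reading off, from the matching gadget, the transport cost attained by the ``honest'' YES-instance combinations and confirming that this value is the one chosen for $\Phi$ in the reduction of Theorem~\ref{thm:UC3P}; since the entire hardness strategy in Section~\ref{sec:proofstrat} is organized around this restriction, the check should amount to a direct bookkeeping rather than a new argument.
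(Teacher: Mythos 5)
Your route differs from the paper's: you reduce UC3P to SBP as a black box, whereas the paper reduces P3DM to SBP directly, re-invoking the alternating-triangles machinery (Theorems \ref{thm:baryvalue}, \ref{thm:sparsealt} and Corollary \ref{cor:barysparse}) on the measures built from $G_{\text{TP}}$. Your route is attractive because it would need no further gadget analysis, but it hinges entirely on the identity $\Phi = \phi^{*}$, and that identity is \emph{not} available on all instances produced in Section \ref{sec:HardProof}. Theorem \ref{thm:baryvalue} establishes $\phi^{*}=50/9$ only for $\mathcal{U_Y}$, i.e., for instances arising from YES-instances of P3DM. By Lemma \ref{lm:mintrip} one always has $\phi^{*}\ge 50/9=\Phi$, but for a NO-instance of P3DM the inequality can be strict: if no mass assignment (even a fractional one) can be supported entirely on 3-4-5 triangles, then $\phi^{*}>50/9$. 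In that situation UC3P is a NO-instance (every combination measure has cost at least $\phi^{*}>\Phi$), yet nothing you have argued prevents the hypothetical SBP algorithm from returning a barycenter with $n$ support points whose cost exceeds $50/9$; your reduction would then output YES incorrectly. So the failure mode is the opposite of the one you flag: $\Phi$ can strictly \emph{lower}-bound $\phi^{*}$, which breaks the direction ``SBP returns a barycenter $\Rightarrow$ UC3P says YES,'' while the direction you worried about ($\Phi>\phi^{*}$) is already excluded by Lemma \ref{lm:mintrip}. The ``direct bookkeeping'' you defer to cannot close this, because it would amount to computing $\phi^{*}$ for NO-instances, which the gadget analysis does not do.

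The repair is small but necessary, and it is essentially what the paper's argument amounts to: after the SBP call returns a barycenter $\bar P$ with at most $n$ support points, additionally compute $\phi(\bar P)$ (efficient for a sparse measure, cf.\ Lemma \ref{lm:phiPstar}) and answer YES only if $\phi(\bar P)\le \Phi$. With that check, a YES answer yields a genuine UC3P witness, and a NO answer (either no sparse barycenter, or one of cost exceeding $50/9$) correctly certifies a NO-instance, since any UC3P witness would have cost exactly $50/9=\phi^{*}$ and hence be a sparse barycenter. Equivalently, one can bypass UC3P altogether and argue, as the paper does, that the sparse barycenter returned for a YES-instance of P3DM has cost $50/9$ and therefore creates a pattern of alternating triangles via Theorem \ref{thm:sparsealt}, so that P3DM itself is decided. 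Your padding/lifting argument for general $m\ge 3$, $d\ge 2$ is fine and matches the paper.
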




Finally, for $m=2$ or $d = 1$, the hardness of SCMP will remain open. For $m=2$, the search for a discrete barycenter takes the form of a classical transportation problem (of polynomial size in the input) \cite{bp-18,m-16}, which is efficiently solvable. Finding a vertex of the underlying polytope leads to a barycenter $\bar P$ with $|\bar P| \leq \sum_{i=1}^m |P_i| -m+1$. When $d =1$, that is, when the measures $P_1,\ldots,P_m$ have support points in $\R$, their barycenter is unique and can be found through ordering the support points of each input measure $P_i$, $i = 1, \ldots, m$, either from least to greatest or vice versa, and then constructing the support points of the barycenter greedily in the same fashion. For example, in \cite{rpdb-12} it is shown that barycenters in $\R$ can be computed in $\mathcal{O}(m \log m)$, which forms the basis for an approximation scheme producing the so-called Sliced Wasserstein barycenters. However, the efficient algorithms for barycenters in these cases do not translate to an efficient decision of SCMP for an arbitrary $N$.
\vspace{-10pt}
\subsection{Strategy of the Proof}\label{subsec:proofstrat}
We prove UC3P to be NP-hard through a (polynomial) reduction of planar three-dimensional matching to UC3P. Planar three-dimensional matching is formally defined as follows.
\\
\noindent{\bf Planar 3-Dimensional Matching [P3DM]}

\noindent\textbf{Input:} Three pairwise disjoint sets $X,Y,$ and $Z$, and a set $T \subseteq X \times Y \times Z$ such that
	\begin{enumerate}
	\item $|X|=|Y|=|Z| = q$
	\item Every element of $X \cup Y \cup Z$ occurs in at most three triples from $T$
	\item The induced graph $G_{\text{P3DM}}$, containing a vertex for each element of $X \cup Y \cup Z$ and for every triple in $T$, and an edge connecting each triple to the elements contained within, is planar.
	\end{enumerate}

\noindent\textbf{Decide:} Does there exist a subset $T' \subseteq T$ such that each element of $X \cup Y \cup Z$ is contained in exactly one triple from $T'$?
\\\\
P3DM is a special case of three-dimensional matching, one of the most famous problems in combinatorial optimization known to be NP-hard \cite{k-72,sw-96}. P3DM itself is NP-hard \cite{df-86}. Thus, a polynomial reduction to UC3P will prove Theorem \ref{thm:UC3P}. To this end, we (efficiently) construct a specific type of instances for which the ability to efficiently decide UC3P would imply the ability to efficiently decide planar three-dimensional matching for any input. 

The construction of these instances builds on the proof strategy of Spieksma and Woeginger in \cite{sw-96} on the hardness of a special matching problem, defined as follows.
\\\\
\noindent{\bf Minimum Circumference Matching Problem [MCM]}


\noindent\textbf{Input:} Three sets $B$ (blue), $R$ (red), and $G$ (green) containing points with integer coordinates in the Euclidean plane.

\noindent\textbf{Output:} A partition of the set $B \cup R \cup G$ containing $n$ three-colored triples $t_j = (b^j,r^j,g^j)$, such that the total cost $\sum_{j=1}^n c(t_j)$ is minimal, where $c(t_j) = ||b^j-r^j|| + ||b^j-g^j|| + ||r^j-g^j||$. 
\\\\
Note that for any instance of P3DM, there is an associated induced graph $G_{\text{P3DM}}$ (property $3$). In order to reduce P3DM to UC3P, we construct a second planar graph $G_{\text{TP}}$ from $G_{\text{P3DM}}$, similar to that constructed in \cite{sw-96} but containing a few additional assumptions; the details of the construction are in Section \ref{sec:G2}. The graph $G_{\text{TP}}$ is built using so-called {\em triangle paths}, defined in Section \ref{subsec:graph}. There are two reasons why $G_{\text{TP}}$ is important: first, to reduce P3DM to UC3P, we construct a simple instance of a barycenter problem that mirrors $G_{\text{TP}}$ almost exactly. Second, just like in the proof of NP-hardness of MCM, our goal is to decide whether there exists a {\em pattern of alternating triangles} in this graph, defined formally in Section \ref{subsec:alt}. 

\begin{proposition}\label{prop:P3DMalt}
Given any instance of P3DM, construct a graph $G_{\text{TP}}$. There exists a pattern of alternating triangles in $G_{\text{TP}}$ if and only if P3DM has answer YES.
\end{proposition}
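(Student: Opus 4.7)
The plan is to prove both directions of the biconditional by exploiting the fact that $G_{\text{TP}}$ is built so as to mirror $G_{\text{P3DM}}$: each element of $X\cup Y\cup Z$ gets replaced by a \emph{triangle path}, and each triple $t\in T$ is represented by a \emph{triple triangle} that attaches to the triangle paths of the three elements it contains. Under this correspondence, an alternating pattern on a triangle path should be forced to commit to exactly one of the two possible ``phases,'' and the phase at an attachment point of the path should encode whether the corresponding triple is selected into $T'$. The proof then consists of translating between selections $T'\subseteq T$ and valid alternating patterns of triangles, checking that the local consistency requirements match on both sides.

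For the direction ``P3DM has answer YES $\Rightarrow$ alternating pattern exists,'' I would start from a perfect matching $T'\subseteq T$. For each triple $t\in T'$, I include the corresponding triple triangle in the pattern. This forces a specific phase at the attachment point on each of the three triangle paths associated with the elements of $t$. Because every element of $X\cup Y\cup Z$ appears in exactly one triple of $T'$, exactly one attachment point of each triangle path is ``activated,'' and the remaining triangles along that path can be filled in uniquely by following the alternation rule. The rest of the triangles in $G_{\text{TP}}$ (corresponding to triples $t\notin T'$) are left out. I then verify that the resulting selection satisfies the definition of a pattern of alternating triangles everywhere in $G_{\text{TP}}$.

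For the converse direction, I would take a valid pattern of alternating triangles and define $T'$ to be the set of triples $t\in T$ whose triple triangle is included in the pattern. The key claim to prove is that each element of $X\cup Y\cup Z$ is covered by exactly one triple from $T'$. This should follow from the alternation constraint along the triangle path of that element: the path has an even/odd structure that permits exactly one ``chosen'' triple triangle to be attached in a consistent alternating phase, and zero or two would break alternation somewhere on the path. Parity/counting arguments on the triangle paths — analogous to those used by Spieksma and Woeginger — give precisely the ``exactly once'' conclusion needed for a matching.

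The main obstacle I anticipate lies in the combinatorial bookkeeping around the attachment of triple triangles to triangle paths, and in particular showing that the alternation on each triangle path really does admit no pattern in which an element is covered zero times or more than once. Since an element can lie in up to three triples (by property $2$ of P3DM), the path has up to three attachment points, and one must rule out configurations where, for instance, two attached triple triangles are simultaneously in the pattern with locally valid alternation. This will rely on the specific construction of $G_{\text{TP}}$ in Section \ref{sec:G2}, and in particular on how the parities of the triangle path lengths are chosen. Once that local rigidity is established, both implications assemble into the statement of the proposition.
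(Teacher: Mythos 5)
Your proposal is correct and follows essentially the same route as the paper: phase propagation of the alternation along each triangle path, combined with the fact that every path contains an even number of triangles, links the coverage of each element point to the selection of exactly one triple triangle, and this correspondence is run in both directions. One small inaccuracy worth noting: in the construction it is the \emph{edges} of $G_{\text{P3DM}}$ (the element--triple incidences) that are replaced by triangle paths, while each element remains a single element point shared by up to three paths; this does not affect your argument, since the ``exactly once'' conclusion still comes from the element point being coverable by the first triangle of exactly one of its incident paths.
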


While our general strategy closely follows the proof in \cite{sw-96}, the (still) continuous nature of the discrete barycenter problem, respectively UC3P -- combination measures could have any number of support points of any mass between $0$ and $1$ --  makes fundamental differences in the proofs arise almost immediately. As we will see, the details and potential pitfalls of the analysis are more involved.

Theorem \ref{thm:SCMPNP} follows from a series of smaller results using $G_{\text{TP}}$ to construct input for UC3P. For any instance of P3DM and an associated graph $G_{\text{TP}}$, $\mathcal{U}$ will denote an instance of UC3P with sparsity $N$ and transport bound $\Phi$ whose input measures $P_1,\ldots,P_m$ are constructed from $G_{\text{TP}}$. If P3DM is assumed to be a YES-instance, we will denote the corresponding instance $\mathcal{U}$ as $\mathcal{U_Y}$. Because UC3P has exactly three input measures $P_1, P_2, P_3$, we call a tuple from $S^*$ a {\em triple} and replace the notation $s_j$ with $t_j$.

In the following, we find the minimum value $\Phi$, then the minimum sparsity $N$, which can appear in a YES-instance of $\mathcal{U}$. A proof for each statement is given in Section \ref{sec:HardProof}. 
First, we observe that for all instances of P3DM, the minimum transport cost of a unit of mass among all triples in $G_{\text{TP}}$ is $50/9$.

\begin{lemma}[Minimum Cost Triple]\label{lm:mintrip}
For any instance $\mathcal{U}$, the minimum cost among all possible triples $t_j$ is $c(t_j) = 50/9$.
\end{lemma}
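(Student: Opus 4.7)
The plan is as follows. Because $\lambda_1=\lambda_2=\lambda_3=1/3$, the per-unit-mass transport cost of a triple $t=(x_1,x_2,x_3)$ equals $c(t)=\sum_{i=1}^3\tfrac{1}{3}\|\bar x-x_i\|^2$ with weighted mean $\bar x = \tfrac{1}{3}(x_1+x_2+x_3)$. A direct expansion, using $\bar x - x_i = -\tfrac{1}{3}\bigl((x_i-x_j)+(x_i-x_k)\bigr)$ and summing, yields the standard identity
\[
c(t) \;=\; \frac{1}{9}\sum_{1\le i<j\le 3}\|x_i-x_j\|^2.
\]
Thus showing $c(t)\ge 50/9$ reduces to the purely geometric statement that, over all choices of one point from each of $P_1,P_2,P_3$, the sum of the three pairwise squared distances is at least $50$, with equality attained by at least one triple.

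Next, I would invoke the explicit coordinate placement of the support points coming from $G_{\text{TP}}$ in Section \ref{sec:G2}. The triangle-path construction is designed so that certain distinguished triples, namely the vertices of the small triangles drawn along the paths, form $3$-$4$-$5$ right triangles (or an analogous configuration): such a triangle has squared side lengths $9, 16, 25$, summing to $50$, which immediately gives the upper bound $c(t_j)\le 50/9$ on the minimum.

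For the matching lower bound, I would argue that any triple which does not come from one of these designated triangles is forced to be spread farther apart, via a case analysis on how the three chosen points sit inside $G_{\text{TP}}$. The spacing used in placing the points along neighboring triangle paths should ensure that as soon as the three points are not all from a single designated triangle, some pairwise squared distance grows enough to push the total strictly above $50$.

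The main obstacle is precisely this lower-bound case analysis: it depends on the detailed coordinates of $G_{\text{TP}}$, and the genuinely delicate case is when two colors come from one designated triangle while the third comes from a nearby triangle. The coordinates of Section \ref{sec:G2} must be calibrated so that this ``near miss'' still overshoots $50$, and the proof will consist of verifying this across all such adjacency patterns in the triangle path structure.
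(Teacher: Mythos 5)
Your overall route coincides with the paper's: rewrite $c(t)$ as $\tfrac{1}{9}$ times the sum of the three pairwise squared distances, read off the upper bound $50/9$ from the 3-4-5 triangles built into the triangle paths, and obtain the matching lower bound by a case analysis over where the three chosen points sit in $G_{\text{TP}}$. The identity and the upper bound are fine. The problem is that the lower bound, which is the entire content of the lemma, is only announced: you say a case analysis "must be calibrated" to work, but you do not carry it out, and your framing of it contains a genuine gap.

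Specifically, you claim that whenever the three points do not form a designated 3-4-5 triangle, "some pairwise squared distance grows enough to push the total strictly above $50$." That is false as a purely geometric statement about $G_{\text{TP}}$: wherever a vertically oriented pair of triangles meets a horizontally oriented pair, there are right triangles with two legs of length $3$, whose total squared side length is $9+9+18=36<50$. No spacing argument can exclude these; they must be ruled out by the \emph{coloring}, i.e., by the assignment of vertices to $P_1,P_2,P_3$, which forces every such triangle to contain two points of the same measure and hence to be infeasible as a triple (Figure \ref{fig:side3options}, left). This is the one nontrivial idea in the lemma and it is absent from your proposal. Once it is in place, the rest of the case analysis is short and not delicate: any two vertices of $G_{\text{TP}}$ are at least $3$ apart; the only remaining feasible configurations with a side of length $3$ and no side of length at least $6$ are the 3-4-5 triangle itself (total $50$); the degenerate collinear triple with sides $3,3,6$ totals $9+9+36=54$; and every other feasible triple, including your "near miss" of two points from one designated triangle plus a point from a neighboring one, and any triple spanning two paths without an element point, has either two sides of length at least $6$ (total at least $36+36+9=81$) or one side of length at least $6$ and two of length at least $3$ (total at least $54$). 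So the minimum is $50/9$, but your proposal as written does not prove it.
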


For a YES-instance of P3DM, Lemma \ref{lm:mintrip} allows the determination of the cost $\phi(\bar P)$ for $\mathcal{U_Y}$, given in Theorem \ref{thm:baryvalue}. This holds due to the structure of $G_{\text{TP}}$: when $G_{\text{TP}}$ is constructed from a YES-instance of P3DM, it is possible to construct a measure containing only triples of minimum cost. 

\begin{theorem}[Transport Cost of a Barycenter]\label{thm:baryvalue} For an instance $\mathcal{U_Y}$, a barycenter has transport cost $\phi(\bar P) = 50/9$. \end{theorem}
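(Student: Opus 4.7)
The plan is to establish the equality $\phi(\bar P) = 50/9$ by separately proving a matching lower and upper bound on the transport cost of any combination measure associated with $\mathcal{U_Y}$. Since every barycenter is a combination measure (it satisfies the non-mass-splitting property), it suffices to work over $\mathcal{C}$.

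For the lower bound I would simply combine Lemma~\ref{lm:mintrip} with the probability-measure normalization. Any combination measure $P^*$ can be written as a set of triples $t_j$ with masses $w_j > 0$ satisfying $\sum_j w_j = 1$, and its transport cost is $\phi(P^*) = \sum_j w_j\, c(t_j)$. Lemma~\ref{lm:mintrip} gives $c(t_j) \geq 50/9$ for every admissible triple, so
\[
\phi(P^*) \;\geq\; \frac{50}{9} \sum_j w_j \;=\; \frac{50}{9}.
\]
In particular, this holds for any barycenter $\bar P$.

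For the matching upper bound I would exhibit a specific combination measure of cost exactly $50/9$. Because $\mathcal{U_Y}$ is derived from a YES-instance of P3DM, Proposition~\ref{prop:P3DMalt} provides a pattern of alternating triangles in $G_{\text{TP}}$. The plan is to show that each triangle in such a pattern corresponds, via the construction in Section~\ref{sec:G2}, to an admissible triple of $\mathcal{U_Y}$ whose cost attains the minimum $50/9$ identified in Lemma~\ref{lm:mintrip}. Assembling these triples with mass $1/n$ each then yields a candidate combination measure $P^*$, and
\[
\phi(P^*) \;=\; \sum_j \tfrac{1}{n}\cdot \tfrac{50}{9} \;=\; \tfrac{50}{9},
\]
since the alternating-triangle pattern contains exactly $n$ triangles.

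The main obstacle I anticipate is the bookkeeping that verifies feasibility: one must check that the triples produced from the alternating-triangle pattern define a \emph{valid} combination measure, meaning the induced marginals on $P_1$, $P_2$, and $P_3$ each distribute mass $1/n$ to every support point exactly once. This relies on the defining property of the triangle-path construction of $G_{\text{TP}}$, namely that in an alternating-triangle pattern every vertex of $G_{\text{TP}}$ is covered by exactly one chosen triangle. Once this marginal-matching is established, the lower and upper bounds combine to force $\phi(\bar P) = 50/9$ and simultaneously certify that $P^*$ is itself a barycenter.
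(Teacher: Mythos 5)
Your proposal is correct and takes essentially the same route as the paper: the paper likewise exhibits a combination measure by assigning mass $1/n$ to the triangles of an alternating-triangle pattern obtained from the P3DM certificate $T'$, verifies that this covers the mass-transport requirements of $P_1,P_2,P_3$, and concludes optimality because every triple costs at least $50/9$ by Lemma~\ref{lm:mintrip}. The only cosmetic differences are that you invoke Proposition~\ref{prop:P3DMalt} to obtain the pattern rather than building it directly from $T'$, and you state the lower bound as a separate step where the paper folds it into the closing sentence.
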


Therefore an instance $\mathcal{U}$ has a possible YES answer only if $\Phi \geq 50/9$. It remains to address the existence of a barycenter with minimum possible sparsity. The non-mass-splitting property gives an immediate lower bound on the number of support points in any barycenter: $\max_{i \leq m} |P_i|$. Thus, a barycenter $\bar P$, and by extension any combination measure, with the smallest support set will always have support size
$$\max_{i \leq m} |P_i| \leq |\bar P| \leq \sum_{i=1}^m |P_i| -m+1.$$ This implies that  $N \geq n$ in a YES-instance of UC3P.

Then, the proof of Theorem \ref{thm:baryvalue} demonstrates the existence of a combination measure $P^*$ containing only triples of minimum cost; therefore no lower transport cost is possible and $P^*$ is a barycenter. This measure has $n$ support points by construction; therefore, an immediate consequence of Theorem \ref{thm:baryvalue} is that for an instance $\mathcal{U}$, there exists a barycenter with exactly $n$ support points. 

\begin{corollary}[Existence of a Sparse Barycenter]\label{cor:barysparse} For an instance $\mathcal{U_Y}$, there exists a barycenter with $n$ support points. \end{corollary}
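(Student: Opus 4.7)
The plan is to extract this corollary directly from Theorem \ref{thm:baryvalue} by combining the explicit construction used there with the general non-mass-splitting lower bound on $|\bar P|$.

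First, I would recall that the proof of Theorem \ref{thm:baryvalue} (to appear in Section \ref{sec:HardProof}) must construct an explicit combination measure $P^*$ whose transport cost equals $50/9$, and, by Lemma \ref{lm:mintrip}, this is possible only if every triple used in $P^*$ is itself of minimum cost $50/9$. Under the uniform mass structure of $\mathcal{U_Y}$ (each of $P_1,P_2,P_3$ has $n$ support points of mass $1/n$), a non-mass-splitting transport that exhausts all the marginal mass must partition the support points of each $P_i$ into the first, second, and third coordinates of a collection of triples with masses summing to $1$. Assigning mass $1/n$ to each of exactly $n$ triples is the unique way this can happen while keeping all per-point masses at $1/n$; hence the construction in Theorem \ref{thm:baryvalue} naturally yields a measure built from $n$ triples.

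Second, each triple contributes at most one weighted-mean support point to $P^*$, so $|P^*|\leq n$. Total transport cost is $n\cdot(1/n)\cdot(50/9)=50/9=\phi(\bar P)$, so $P^*$ is a barycenter. For the matching lower bound, I would invoke the observation already made in Section \ref{subsec:results}: since every barycenter has a non-mass-splitting transport plan, each of the $n$ support points of (say) $P_1$ must be reached from at least one barycenter support point, giving $|P^*|\geq \max_{i\leq 3}|P_i|=n$. Combining the two bounds yields $|P^*|=n$.

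The main potential obstacle is showing that the $n$ weighted means are in fact distinct (so that the $n$ triples really give $n$ support points rather than fewer through coincidental collisions). However, this is circumvented entirely by the lower bound $|P^*|\geq n$: even if two triples produced the same weighted mean, the non-mass-splitting constraint combined with the marginals $|P_i|=n$ would force $P^*$ to have at least $n$ support points, which together with $|P^*|\leq n$ forces distinctness and the conclusion $|P^*|=n$. So the corollary follows formally from Theorem \ref{thm:baryvalue}, Lemma \ref{lm:mintrip}, and the non-mass-splitting property, with no additional geometric argument required beyond what is already packaged in the proof of Theorem \ref{thm:baryvalue}.
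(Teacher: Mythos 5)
Your proposal is correct and follows essentially the same route as the paper: the corollary is read off from the explicit alternating-triangle construction in the proof of Theorem~\ref{thm:baryvalue}, which assigns mass $1/n$ to exactly $n$ minimum-cost triples and hence yields a barycenter with $n$ support points. Your additional invocation of the lower bound $|\bar P|\geq\max_{i\leq 3}|P_i|$ to rule out coincidental collisions of weighted means is a sensible extra precaution the paper leaves implicit (though note that, because some coordinates are duplicated within the $P_i$, that bound is really on the number of \emph{distinct} support points; in the actual construction the $n$ triangles are geometrically separated, so their weighted means are distinct anyway).
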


Through Corollary \ref{cor:barysparse} and Theorem \ref{thm:baryvalue}, a YES-instance of P3DM implies a YES-instance of UC3P in the corresponding instance $\mathcal{U}$ with $N \geq n$ and $\Phi \geq 50/9$. In order to show a YES-instance of UC3P in $\mathcal{U}$ implies the corresponding instance of P3DM also has answer YES, we assume there exists a combination measure with $n$ support points and transport cost $50/9$. The combinations from the combination measure create a pattern of alternating triangles in $G_{\text{TP}}$. 

\begin{theorem}[Barycenters Create a Pattern of Alternating Triangles]\label{thm:sparsealt} For an instance $\mathcal{U}$ that has a barycenter $P$ of $n$ support points and transport cost $50/9$, $P$ creates a pattern of alternating triangles. 
\end{theorem}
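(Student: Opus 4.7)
The plan is to exploit the tight interplay between the sparsity bound $|\supp(P)|=n$, the uniformity of input masses, and the per-triple cost lower bound of Lemma \ref{lm:mintrip}. First, I would show that under the non-mass-splitting property the hypotheses force every support point of $P$ to carry mass exactly $1/n$ and, for each input measure $P_i$, induce a bijection between $\supp(P)$ and $\supp(P_i)$. Indeed, the $n$ support points of $P$ distribute total mass $1$ to $P_i$ so that every point of $P_i$ receives exactly its mass $1/n$; since each support point of $P$ sends its full mass to a single point of $P_i$ and the two sides have the same cardinality $n$, the map must be a bijection and every weight $w_j$ equals $1/n$. Consequently
\[
\phi(P)=\frac{1}{n}\sum_{j=1}^{n} c(t_j)=\frac{50}{9},
\]
and combined with $c(t_j)\geq 50/9$ from Lemma \ref{lm:mintrip}, this forces $c(t_j)=50/9$ for every triple $t_j$ in the support of $P$.

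Next, I would characterize the triples achieving the minimum cost $50/9$ geometrically using the construction of $G_{\text{TP}}$ from Section \ref{sec:G2}. The claim is that such triples correspond precisely to the triangles of $G_{\text{TP}}$: a triple taking one point from each of $P_1,P_2,P_3$ attains cost $50/9$ only when its three points form such a triangle, with any cross-triangle choice strictly exceeding this bound. Granting this characterization, the $n$ triples used by $P$ are exactly $n$ triangles of $G_{\text{TP}}$, and by the bijection argument above they cover every vertex of $G_{\text{TP}}$ exactly once.

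To finish, I would invoke the triangle-path structure described in Section \ref{subsec:graph}. A collection of triangles that covers every vertex of a triangle path exactly once, using only triangles of that path, must pick every other triangle along the path; aggregating this observation across all the triangle paths that compose $G_{\text{TP}}$ produces precisely the pattern of alternating triangles defined in Section \ref{subsec:alt}.

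The main obstacle is the second step: ruling out that any triple whose three points straddle different triangles of $G_{\text{TP}}$, or that lie outside the triangle-path skeleton altogether, attains the minimum cost $50/9$. Because UC3P permits continuously varying masses and arbitrary weighted means as support locations, one cannot simply appeal to the combinatorial parity arguments used by Spieksma and Woeginger for MCM; instead, explicit geometric estimates on how the squared-Wasserstein cost responds to off-triangle perturbations of a triple are needed, and it is here that the continuous nature of the barycenter problem requires fundamentally different reasoning from the matching setting.
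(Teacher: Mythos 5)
Your first step (uniform weights $1/n$, a bijection with each $\supp(P_i)$, and $c(t_j)=50/9$ forced for every triple in the support of $P$) matches the paper's opening and is correct. The genuine gap is in your second step, and it is not merely a missing estimate: the claim that a triple attains cost $50/9$ \emph{only} when its three points form one of the $n$ building-block 3-4-5 triangles of $G_{\text{TP}}$ is false. The construction deliberately produces additional feasible 3-4-5 triangles -- called \emph{off-path triangles} in the paper -- whose vertices straddle two different triangle paths, for instance at an element point incident to two paths meeting at opposite corners (Figure \ref{fig:twopath}, right) or to three paths (Figure \ref{fig:threepath}). These have cost exactly $50/9$, so no geometric perturbation bound can exclude them, and your proposed route for closing the gap (showing that every cross-triangle choice strictly exceeds $50/9$) is structurally unable to succeed.

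The paper instead rules out off-path triangles by a combinatorial propagation argument: if mass is assigned to an off-path triangle at an element point, the remaining vertices of the first triangles of the two affected paths are left with no unused minimum-cost triangle containing them (this is where the prescribed vertical orientation of the first pair and horizontal orientation of the second pair in each path is used), so some mass must go to a triple of cost strictly above $50/9$, contradicting $\phi(P)=50/9$; the same argument is repeated for off-path triangles occurring further along the paths. Only after all off-path triangles are excluded does the parity argument you sketch in your third step (every other triangle along each path, anchored by which path's first triangle receives the element point) go through. So your outline correctly locates the hard part but asserts a false characterization of the minimum-cost triples and proposes a local geometric repair where the needed argument is a global, path-propagating one.
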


By Theorem \ref{thm:sparsealt} and Proposition \ref{prop:P3DMalt}, the existence of a barycenter with minimum possible support size $n$ and transport cost $50/9$ implies that the instance of P3DM from which $G_{\text{TP}}$ was constructed must have answer YES. 

\begin{theorem}[P3DM Reduction to UC3P]\label{thm:yesiff}
For an instance $\mathcal{U}$ with $N = n$ and $\Phi = 50/9$, UC3P has answer YES if and only if P3DM has answer YES.
\end{theorem}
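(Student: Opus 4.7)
The plan is to prove the biconditional by directly chaining the preceding results, so the theorem serves as a clean summary statement for the reduction rather than introducing new content. Both directions fall out once the bounds $N = n$ and $\Phi = 50/9$ are shown to be tight in the appropriate sense.

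For the forward direction, I would suppose P3DM has answer YES, so the associated instance $\mathcal{U}$ is of type $\mathcal{U_Y}$. Corollary \ref{cor:barysparse} furnishes a barycenter for $\mathcal{U_Y}$ with exactly $n$ support points, and Theorem \ref{thm:baryvalue} gives that its transport cost equals $50/9$. Because every barycenter admits a non-mass-splitting optimal transport plan, it can be represented as a combination measure $P^*$, which then witnesses that UC3P with $N = n$, $\Phi = 50/9$ has answer YES.

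For the reverse direction, I would suppose UC3P has answer YES and pick a combination measure $P^*$ with $|\supp(P^*)| \leq n$ and $\phi(P^*) \leq 50/9$. The non-mass-splitting property forces $|\supp(P^*)| \geq \max_{i \leq 3} |P_i| = n$, so $|\supp(P^*)| = n$ exactly. Next I would argue $\phi(P^*) = 50/9$: writing $\phi(P^*) = \sum_j w_j\, c(t_j)$, where $w_j$ are the masses on the chosen triples $t_j$ and $\sum_j w_j = 1$, Lemma \ref{lm:mintrip} bounds each $c(t_j) \geq 50/9$, which yields $\phi(P^*) \geq 50/9$; combined with $\Phi = 50/9$ this forces equality. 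Then $P^*$ satisfies the hypotheses of Theorem \ref{thm:sparsealt}, which produces a pattern of alternating triangles in $G_{\text{TP}}$; Proposition \ref{prop:P3DMalt} then yields the YES answer for P3DM.

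The main obstacle is not in the present theorem but in the supporting machinery (particularly Theorem \ref{thm:sparsealt}), which is invoked here as a black box. Locally, the only delicate point is the cost accounting: Lemma \ref{lm:mintrip} bounds the cost of an individual triple, and one must combine this with the unit-mass constraint $\sum_j w_j = 1$ to derive the global bound $\phi(P^*) \geq 50/9$. This follows immediately from linearity of the transport cost in the triple masses $w_j$, but warrants an explicit sentence so that the hypothesis $\Phi = 50/9$ is used tightly and so the reader sees why no subtler mass-splitting argument is needed.
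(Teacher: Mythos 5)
Your proposal is correct and follows essentially the same route as the paper: the forward implication from P3DM YES to UC3P YES via Theorem \ref{thm:baryvalue} and Corollary \ref{cor:barysparse}, and the converse via Theorem \ref{thm:sparsealt} and Proposition \ref{prop:P3DMalt}. Your explicit argument that the bounds $|\supp(P^*)|\leq n$ and $\phi(P^*)\leq 50/9$ are forced to hold with equality (so that Theorem \ref{thm:sparsealt} genuinely applies to the witness) is a small but welcome tightening of a step the paper leaves implicit.
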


While we prove Theorem \ref{thm:sparsealt} using the minimum values $N = n$ and $\Phi =50/9$, an inspection of $G_{\text{TP}}$ reveals that a pattern of alternating triangles is produced in $G_{\text{TP}}$ for larger values of $N$ and $\Phi$ as well. These values can be increased up to thresholds which depend on the exact input of $P_1$, $P_2$, and $P_3$. 

Regardless, the existence of a pattern of alternating triangles in $G_{\text{TP}}$ produced by solutions to UC3P completes the reduction of P3DM to UC3P.

\vspace{15pt}

\begin{thm2}
UC3P is NP-hard. 
\end{thm2}

\begin{proof}
For any instance $\mathcal{U}$, the graph $G_{\text{TP}}$ can be constructed in polynomial time; see Section \ref{subsec:graph}. By Theorem \ref{thm:yesiff}, P3DM and the corresponding UC3P in $\mathcal{U}$ have the same decision. Therefore, if UC3P were efficiently solvable, P3DM would also be efficiently solvable. Since P3DM is NP-hard, UC3P is NP-hard.
 \end{proof} 

The NP-hardness of UC3P connects to both SBP and SCMP. For Theorem \ref{thm:SCMPNP}, if SCMP were efficiently solvable, UC3P would be efficiently solvable. For Theorem \ref{thm:SBPNP}, an efficient algorithm for SBP implies the ability to solve UC3P by simply computing a sufficiently sparse barycenter. 


\section{A Review of the P3DM Graph $G_{\text{TP}}$}\label{sec:G2}

The graph $G_{\text{TP}}$ is used in an instance $\mathcal{U}$ to connect an instance of P3DM to a problem UC3P. In this section, we begin with the construction of the graph. We follow the process described in \cite{sw-96} (which itself is based on a similar construction in \cite{prw-94}) with a few minor changes. Then we turn to a pattern of alternating triangles as mentioned in Proposition \ref{prop:P3DMalt}. This pattern appears in $G_{\text{TP}}$ only for YES-instances of the instances considered in \cite{sw-96}.  We show in Section \ref{sec:HardProof} that this pattern is also produced by solutions to UC3P when $G_{\text{TP}}$ is used to form the input probability measures. We conclude this section with a description of the transformation of $G_{\text{TP}}$ to probability measures for an instance for UC3P. 

\subsection{Construction of the Graph $G_{\text{TP}}$}\label{subsec:graph}

The construction of $G_{\text{TP}}$ begins with an instance of P3DM and its associated induced planar graph $G_{\text{P3DM}}$. First, a rectilinear layout of $G_{\text{P3DM}}$ is computed, which can be done efficiently. This places all vertices at integer coordinates, and ensures that all edges are either vertical or horizontal. Next, the coordinates of all points are shifted, multiplying by 1000, so that there is significant vertical and horizontal space between the vertices.

The graph $G_{\text{TP}}$ is now built from repetition of a single basic `building block': a right triangle with integer side lengths of $3$, $4$, and $5$, hereafter called a {\em 3-4-5 triangle}. The legs of all 3-4-5 triangles are aligned vertically and horizontally in the plane. Each vertex of $G_{\text{TP}}$ is assigned to one of three sets $P_1, P_2,$ and $P_3$. Each vertex of a 3-4-5 triangle belongs to a different set.

The graph $G_{\text{P3DM}}$ contains two types of vertices: those corresponding to triples in $T$, and those corresponding to elements of $X$, $Y$, and $Z$. In the computed rectilinear layout, for each vertex of $G_{\text{P3DM}}$ that corresponds to a triple in $T$, we replace the vertex with a 3-4-5 triangle. In this case, we call the 3-4-5 triangle a {\em triple triangle}. Each vertex of a triple triangle represents the element from $X$, $Y$, or $Z$ in the triple from $T$. 

The remaining vertices of $G_{\text{P3DM}}$, those corresponding to elements of $X$, $Y$, and $Z$, remain unchanged in $G_{\text{TP}}$ and are called {\em element points}. One edge connects each element point to the vertex of the triple triangle representing that element. Since an element of $X$, $Y$, and $Z$ appears in at most three triples in $T$, at most three edges leave an element point.

Each of these edges is now replaced with a so-called {\em triangle path}, constructed out of rectangular pairs of 3-4-5 triangles, starting from the element point and ending at a vertex of the triple triangle. The vertices of each triangle in the triangle path are vertices in $G_{\text{TP}}$. The paths are constructed in such a way as to meet the following requirements: 
\begin{enumerate}
\item The first pair of 3-4-5 triangles in each path is oriented vertically, that is, with the longer edge (4) vertical. 
\item The second pair of 3-4-5 triangles is connected to the first at just one vertex, and is oriented horizontally, that is, with the longer edge horizontal. 
\item The orientation of all triangles after the first four are chosen in such a way that the vertices of $G_{\text{TP}}$, and the vertices of every 3-4-5 triangle within, are assigned to three disjoint sets. In particular, the triple triangles, at which three paths end, must also have one vertex in each of the three sets. \end{enumerate}

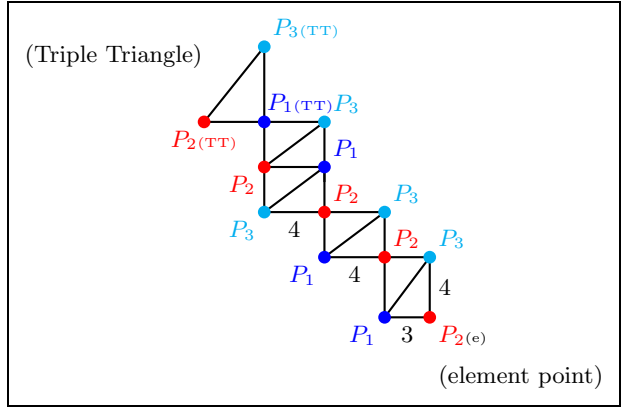
\begin{figure}[!t]
\begin{center}
\fbox{\begin{tikzpicture}[scale=0.4]

\draw [thick](1,2)--(-0.5,2) node [midway, below]{$3$};
\draw [thick] (1,2)--(1,4) node [midway, right]{$4$};
\draw [thick] (1,4)--(-0.5,2);

\draw [thick](-0.5,4)--(-2.5,4) node [midway, below]{$4$};
\draw [thick] (-0.5,4)--(-0.5,5.5);
\draw [thick](-0.5,4)--(1,4);
\draw [thick] (-0.5,4)--(-0.5,2);
\draw [thick] (-0.5,5.5)--(-2.5,4);

\draw [thick](-2.5,5.5)--(-4.5,5.5) node [midway, below]{$4$};
\draw [thick] (-2.5,5.5)--(-2.5,7);
\draw [thick](-2.5,5.5)--(-0.5,5.5);
\draw [thick] (-2.5,5.5)--(-2.5,4);
\draw [thick] (-4.5,5.5)--(-2.5,7);

\draw [thick] (-4.5,7)--(-4.5,5.5);
\draw [thick] (-4.5,7)--(-2.5,7);
\draw [thick] (-4.5,7)--(-2.5,8.5);
\draw [thick] (-4.5,7)--(-4.5,8.5);
\draw [thick] (-4.5,8.5)--(-2.5,8.5);
\draw [thick] (-2.5,6.5)--(-2.5,8.5);

\draw [thick] (-4.5,11)--(-6.5,8.5);
\draw [thick] (-4.5,11)--(-4.5,8.5);
\draw [thick] (-6.5,8.5)--(-4.5,8.5);

\draw (-9.5,10) node [anchor= south]{(Triple Triangle)};
\draw (1,0) node [anchor = west]{(element point)};

\fill [red] (-0.5,4) circle (6pt) node [anchor = south west]{$P_2$};
\fill [red] (1,2) circle (6pt) node [anchor = north west]{$P_2${\color{black}\tiny(e)}};
\fill [blue] (-0.5,2) circle (6pt) node [anchor = north east]{$P_1$};
\fill [cyan] (1, 4) circle (6pt) node [anchor = south west]{$P_3$};
\fill [blue] (-2.5,4) circle (6pt) node [anchor = north east]{$P_1$};
\fill [cyan] (-0.5, 5.5) circle (6pt) node [anchor = south west]{$P_3$};
\fill [red] (-2.5,5.5) circle (6pt) node [anchor = south west]{$P_2$};
\fill [cyan] (-4.5,5.5) circle (6pt) node [anchor = north east]{$P_3$};
\fill [blue] (-2.5, 7) circle (6pt) node [anchor = south west]{$P_1$};
\fill [red] (-6.5,8.5) circle (6pt) node [anchor = north]{ $P_2${\tiny(TT)}};
\fill [cyan] (-4.5,11) circle (6pt) node [anchor = south west]{$P_3${\tiny(TT)}};
\fill [red] (-4.5,7) circle (6pt) node [anchor = north east]{$P_2$};
\fill [cyan] (-2.5,8.5) circle (6pt) node [anchor = south west]{$P_3$};
\fill [blue] (-4.5, 8.5) circle (6pt) node [anchor = south west]{\hspace{-.03in}$P_1${\tiny(TT)}};
\end{tikzpicture}}
\end{center}
\caption{An example of a triangle path. Each element point is connected to the vertex of all triple triangles containing the element by a triangle path. The orientation of the first two pairs of triangles is fixed; thereafter, pairs can be oriented horizontally or vertically, and can be connected at one or two vertices to the previous block. }\label{fig:pathpattern}
\end{figure}

Given sufficient space -- which is provided by the initial shifting of coordinates -- it is always possible to construct triangle paths in this manner \cite{sw-96}. 
We show an example of a triangle path in Figure \ref{fig:pathpattern}. The triangle path begins at an element point, denoted $(e)$ in all figures. The orientation of the first two rectangle pairs is fixed. After the first two pairs, the pairs can be oriented horizontally or vertically, and can be connected at one or two vertices to the previous pair. There are many options for how these triangle paths may be built, roughly following along the corresponding vertical or horizontal edge in the rectilinear layout. However, the path must be chosen in such a way that the vertices in the triple triangle at the end of the paths are assigned to three different sets, $P_1$, $P_2$, $P_3$. 

Recall that at most three paths leave an element point. This is displayed  in Figure \ref{fig:threepath}. When a vertex that is not an element point belongs to two paths, it is listed in its respective set twice. In Figure \ref{fig:threepath}, there are two such points, labeled $(2)$. These points are a minor complication for using $G_{\text{TP}}$ as input for UC3P; the details are addressed in Section \ref{subsec:G2toBP}.

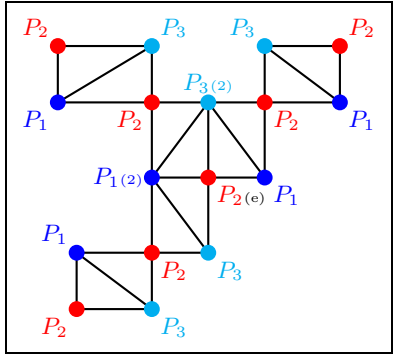
\begin{figure}[t]
\begin{center}
\fbox{\begin{tikzpicture}[scale=0.5]


\draw [thick](1,2)--(-0.5,2);
\draw [thick] (1,2)--(1,4);
\draw [thick] (1,4)--(-0.5,2);

\draw [thick](-0.5,4)--(-3,4);
\draw [thick] (-0.5,4)--(-0.5,5.5);
\draw [thick](-0.5,4)--(1,4);
\draw [thick] (-0.5,4)--(-0.5,2);
\draw [thick] (-0.5,5.5)--(-3,4);

\draw [thick](-3,5.5)--(-0.5,5.5);
\draw [thick] (-3,5.5)--(-3,4);


\draw [thick](1,2)--(2.5,2);
\draw [thick] (2.5,2)--(2.5,4);
\draw [thick] (1,4)--(2.5,2);
\draw[thick] (1,4)--(2.5,4);

\draw[thick](4.5,4)--(2.5,4);
\draw[thick](2.5,4)--(2.5,5.5);
\draw[thick](4.5,4)--(2.5,5.5);
\draw[thick](4.5,4)--(4.5,5.5);
\draw[thick](2.5,5.5)--(4.5,5.5);


\draw[thick](1,2)--(1,0);
\draw[thick](1,0)--(-0.5,2);
\draw [thick] (1,0)--(-0.5,0);
\draw [thick] (-0.5,0)--(-0.5,2);

\draw[thick] (-2.5,-1.5)--(-0.5,-1.5);
\draw[thick] (-2.5,-1.5)--(-2.5,0);
\draw[thick] (-0.5,-1.5)--(-2.5,0);
\draw[thick] (-2.5,0)--(-0.5,0);
\draw[thick] (-0.5,-1.5)--(-0.5,0);

\fill [red] (1,2) circle (6pt) node [anchor = north west]{$P_2${\color{black}\tiny(e)}};
\fill [blue] (-0.5,2) circle (6pt) node [anchor = east]{$P_1${\tiny(2)}};
\fill [cyan] (1, 4) circle (6pt) node [anchor = south ]{$P_3${\tiny (2)}};
\fill [red] (-0.5,4) circle (6pt) node [anchor = north east]{$P_2$};
\fill [blue] (-3,4) circle (6pt) node [anchor = north east]{$P_1$};
\fill [cyan] (-0.5, 5.5) circle (6pt) node [anchor = south west]{$P_3$};
\fill [red] (-3,5.5) circle (6pt) node [anchor = south east]{$P_2$};
\fill [red] (2.5,4) circle (6pt) node [anchor = north west]{$P_2$};
\fill [blue] (2.5,2) circle (6pt) node [anchor = north west]{$P_1$};
\fill [blue] (4.5, 4) circle (6pt) node [anchor = north west]{$P_1$};
\fill [cyan] (2.5, 5.5) circle (6pt) node [anchor = south east]{$P_3$};
\fill [red] (4.5,5.5) circle (6pt) node [anchor = south west]{$P_2$};
\fill [cyan] (1,0) circle (6pt) node [anchor = north west]{$P_3$};
\fill[red] (-0.5,0) circle (6pt) node [anchor = north west]{$P_2$};
\fill[blue] (-2.5,0) circle (6pt) node [anchor = south east]{$P_1$};
\fill[cyan] (-0.5,-1.5) circle (6pt) node [anchor = north west]{$P_3$};
\fill[red] (-2.5,-1.5) circle (6pt) node [anchor = north east]{$P_2$};
\end{tikzpicture}}
\end{center}
\caption{Each element point, shown here as an element of $P_2$, is incident to at most three triangle paths.}\label{fig:threepath}
\end{figure}

The construction of $G_{\text{TP}}$ is now complete. It contains three types of vertices: element points, representing elements of $X$, $Y$, and $Z$; the three vertices of each triple triangle, representing the elements of $X$, $Y$ and $Z$ in that triple of $T$; and vertices of 3-4-5 triangles in the triangle paths, which have no meaning in P3DM. Then, $P_1$, $P_2$, and $P_3$ all contain $n$ points where $n$ is the total number of 3-4-5 triangles in $G_{\text{TP}}$.

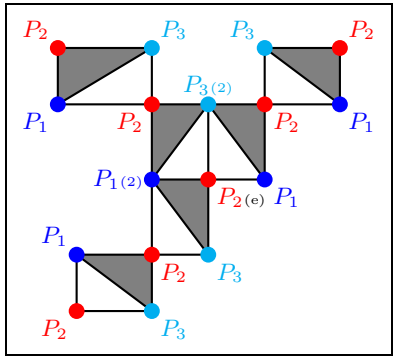
\begin{figure}[t]
\begin{center}
\fbox{\begin{tikzpicture}[scale=0.5]

\path [fill=gray] (1,4)--(-0.5,4)--(-0.5,2);
\path [fill=gray] (-3,4)--(-3,5.5)--(-0.5,5.5);
\path [fill=gray] (1,2)--(1,0)--(-0.5,2);
\path [fill=gray] (-2.5,0)--(-0.5,-1.5)--(-0.5,0);
\path [fill=gray] (2.5,2)--(1,4)--(2.5,4);
\path [fill=gray] (2.5,5.5)--(4.5,5.5)--(4.5,4);

\draw [thick](1,2)--(-0.5,2);
\draw [thick] (1,2)--(1,4);
\draw [thick] (1,4)--(-0.5,2);
\draw [thick](-0.5,4)--(-3,4);
\draw [thick] (-0.5,4)--(-0.5,5.5);
\draw [thick](-0.5,4)--(1,4);
\draw [thick] (-0.5,4)--(-0.5,2);
\draw [thick] (-0.5,5.5)--(-3,4);

\draw [thick](-3,5.5)--(-0.5,5.5);
\draw [thick] (-3,5.5)--(-3,4);

\draw [thick](1,2)--(2.5,2);
\draw [thick] (2.5,2)--(2.5,4);
\draw [thick] (1,4)--(2.5,2);
\draw[thick] (1,4)--(2.5,4);

\draw[thick](4.5,4)--(2.5,4);
\draw[thick](2.5,4)--(2.5,5.5);
\draw[thick](4.5,4)--(2.5,5.5);
\draw[thick](4.5,4)--(4.5,5.5);
\draw[thick](2.5,5.5)--(4.5,5.5);

\draw[thick](1,2)--(1,0);
\draw[thick](1,0)--(-0.5,2);
\draw [thick] (1,0)--(-0.5,0);
\draw [thick] (-0.5,0)--(-0.5,2);

\draw[thick] (-2.5,-1.5)--(-0.5,-1.5);
\draw[thick] (-2.5,-1.5)--(-2.5,0);
\draw[thick] (-0.5,-1.5)--(-2.5,0);
\draw[thick] (-2.5,0)--(-0.5,0);
\draw[thick] (-0.5,-1.5)--(-0.5,0);

\fill[blue] (-2.5,0) circle (6pt) node [anchor = south east]{$P_1$};
\fill[cyan] (-0.5,-1.5) circle (6pt) node [anchor = north west]{$P_3$};
\fill[red] (-2.5,-1.5) circle (6pt) node [anchor = north east]{$P_2$};
\fill [cyan] (1,0) circle (6pt) node [anchor = north west]{$P_3$};
\fill[red] (-0.5,0) circle (6pt) node [anchor = north west]{$P_2$};
\fill [red] (-3,5.5) circle (6pt) node [anchor = south east]{$P_2$};
\fill [red] (-0.5,4) circle (6pt) node [anchor = north east]{$P_2$};
\fill [blue] (-3,4) circle (6pt) node [anchor = north east]{$P_1$};
\fill [cyan] (-0.5, 5.5) circle (6pt) node [anchor = south west]{$P_3$};
\fill [red] (1,2) circle (6pt) node [anchor = north west]{$P_2${\color{black}\tiny(e)}};
\fill [blue] (-0.5,2) circle (6pt) node [anchor = east]{$P_1$\tiny{(2)}};
\fill [cyan] (1, 4) circle (6pt) node [anchor = south]{$P_3$\tiny{(2)}};
\fill [red] (2.5,4) circle (6pt) node [anchor = north west]{$P_2$};
\fill [blue] (2.5,2) circle (6pt) node [anchor = north west]{$P_1$};
\fill [blue] (4.5,4) circle (6pt) node [anchor = north west]{$P_1$};
\fill [cyan] (2.5, 5.5) circle (6pt) node [anchor = south east]{$P_3$};
\fill [red] (4.5,5.5) circle (6pt) node [anchor = south west]{$P_2$};

\end{tikzpicture}}
\end{center}
\caption{Matching the elements of sets $P_1$, $P_2$, and $P_3$ into a partition of $n$ minimum circumference triangles creates alternating triangle paths. The triple triangle is selected as part of the partition if and only if the element point is also selected in the first triangle of the path. Here, the element point belongs to the southwest path.}\label{fig:alttriangles}
\end{figure}

\subsection{Alternating Triangle Paths}\label{subsec:alt}

It is shown in \cite{sw-96} that a partition of the points of $P_1$, $P_2$, and $P_3$ into triangles with total circumference $12n$ is possible if and only if a similar graph is constructed from a YES-instance of P3DM. We now outline this proof for our graph $G_{\text{TP}}$, as the geometric connection from P3DM to UC3P depends on the same pattern. 


\begin{pr} Given any instance of P3DM, construct a graph $G_{\text{TP}}$. There exists a pattern of alternating triangles in $G_{\text{TP}}$ if and only if P3DM has answer YES. \end{pr}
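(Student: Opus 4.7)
The plan is to exploit the rigid combinatorial structure of triangle paths and reduce everything to a local ``alternation lemma'' that I would prove first. The central observation is that along any single triangle path, if we wish to cover every vertex by a disjoint union of $3$-$4$-$5$ triangles drawn from the graph, then within each consecutive rectangular pair exactly one of the two $3$-$4$-$5$ triangles may be selected, and the selection is forced to alternate down the path. Consequently each triangle path admits exactly two valid local coverings: one in which the first triangle (incident to the element point) is selected and the triple triangle at the far end is selected, and one in which neither is selected. This is the statement behind the caption of Figure~\ref{fig:alttriangles}, and I would prove it by induction on the number of rectangle pairs along the path, checking carefully the two ways in which consecutive pairs may be joined (at one or two shared vertices).

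For the ``if'' direction, given a matching $T' \subseteq T$ that certifies a YES answer to P3DM, I would include the triple triangle of every $t \in T'$ in the partition. For each element point $e$ there is a unique $t \in T'$ containing $e$; along the triangle path from $e$ to the triple triangle of $t$ I select the ``first and last'' alternation, while along each of the other (at most two) paths incident to $e$ I select the opposite alternation, so that neither the first triangle nor the terminal triple triangle is used. By the alternation lemma, every interior vertex of every triangle path is covered exactly once, every element point is covered exactly once (via its unique path to its matched triple triangle), and every triple-triangle vertex is covered exactly once (either inside its triple triangle or by the first rectangle pair of a path leaving it). This produces a pattern of alternating triangles.

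For the ``only if'' direction, I would start with a pattern of alternating triangles. Each element point $e$ lies in exactly one $3$-$4$-$5$ triangle, which must be the first triangle of one of its at most three incident triangle paths. The alternation lemma then forces the triple triangle at the far end of this distinguished path to be selected and, along every other path incident to $e$, forces both the first triangle and the terminal triple triangle to be unselected. Defining $T' := \{\, t \in T : \text{the triple triangle of } t \text{ is selected}\,\}$, every element of $X \cup Y \cup Z$ then appears in exactly one triple of $T'$, giving a YES answer to P3DM.

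The main obstacle is the alternation lemma itself. The triangle paths are not linear chains of disjoint rectangle pairs: consecutive pairs may share either one or two vertices (Figure~\ref{fig:pathpattern}), the orientation of pairs switches between horizontal and vertical, and at element points up to three paths meet and may share the auxiliary vertices marked ``(2)'' in Figure~\ref{fig:threepath}. Verifying that the only feasible local coverings are the two alternations described --- and, crucially, that the alternations chosen on the three paths meeting at an element point or on the three paths meeting at a triple triangle can always be glued consistently at these shared vertices --- is where the bulk of the case analysis lives, and it closely mirrors the analogous step in the argument of Spieksma and Woeginger~\cite{sw-96}.
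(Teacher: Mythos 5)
Your proposal is correct and follows essentially the same route as the paper: the forced alternation of selected $3$-$4$-$5$ triangles along each triangle path, the parity argument (each path has an even number of triangles) linking the selection of the first triangle at an element point to the selection of the triple triangle at the far end, and the resulting correspondence between selected triple triangles and a perfect matching $T'$. The only difference is presentational --- you isolate the propagation step as an explicit ``alternation lemma'' with its gluing conditions at shared vertices, which the paper asserts more informally (``this pattern propagates through the triangle path'').
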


\begin{proof}
Consider a graph $G_{\text{TP}}$, with vertex sets $P_1$, $P_2$, and $P_3$, constructed as described in Section \ref{subsec:graph} from any instance of P3DM, and containing $n$ total 3-4-5 triangles. 

Suppose there exists a partition of $P_1 \times P_2 \times P_3$ into triangles with total circumference of $12n$. Each element point must be assigned to the 3-4-5 triangle at the start of one triangle path \cite{sw-96}. The next 3-4-5 triangle in the triangle path, the one in a rectangular pair with the first 3-4-5 triangle, must not be in the partition. Then the third 3-4-5 triangle must be included, since that is the only available minimum circumference triangle that includes the last vertex of the first pair of 3-4-5 triangles. This pattern propagates through the triangle path.

The first 3-4-5 triangle of any other paths originating at the same element point must not be selected in the partition, since the element point has already been assigned to a triangle. Therefore, the second 3-4-5 triangle in those triangle paths must be in the partition. Then the third 3-4-5 triangle must not be included in the partition, with this pattern again repeating through the triangle path.

This creates a particular assignment pattern in each triangle path, which we call an {\em alternating triangle path}, see Figure \ref{fig:alttriangles}. In the figure, the element point $(e)$ belongs to the 3-4-5 triangle at the start of the southwest path. When every triangle path is an alternating triangle path, we say a {\em pattern of alternating triangles} exists in graph $G_{\text{TP}}$.

Every path contains an even number of triangles. Therefore, if a triple triangle contains a vertex at which an alternating triangle path ends, where the first triple triangle of the path is included in the partition, the triple triangle must also be included in the partition. Conversely, if a triple triangle contains a vertex at which an alternating triangle path ends, where the second triple triangle is included in the partition, the triple triangle must not be included in the partition. The resulting pattern of alternating triangles creates a correspondence between element points (elements of $X$, $Y$, and $Z$), and the triple triangles (triples in $T$). Because the existence of a partition of total circumference $12n$ is assumed, each element point is assigned to one triple triangle, and the set of triples containing the selected triple triangles is a YES solution to P3DM. 

For the opposite implication, when P3DM has answer YES, a pattern of alternating triangles can be constructed using $T'$. Therefore MCM also has answer YES.
 \end{proof}

The primary focus in our proof of complexity for UC3P is proving that a combination measure which satisfies the requirements of UC3P when the input is constructed from a graph $G_{\text{TP}}$ also creates a pattern of alternating triangles when $N$ and $\Phi$ are not too far from minimum sparsity and optimal transport cost. Next, we describe how $G_{\text{TP}}$ serves as input for UC3P.

\subsection{Input for UC3P}\label{subsec:G2toBP}

The problem UC3P takes as input three probabilities measures with equal size support sets and uniformly distributed mass.
We use the vertex sets $P_1$, $P_2$, and $P_3$ from $G_{\text{TP}}$ for the support sets of probability measures; the edges of $G_{\text{TP}}$ are not used in UC3P. Recalling that some points appear twice in sets $P_1$, $P_2$, and $P_3$, the transformation to probability measures requires a bit of care, as the properties for discrete barycenters in \cite{abm-16} rely on an underlying assumption that each support point is unique. 

We allow support points within a measure to have the same coordinates, resulting in uniform mass assignment $1/n$ on all support points. If two or more support points in one measure have the same coordinates, we regain the usual form, that is, unique coordinates in the support set, by allowing some support points to have additional mass. Due to the structure of $G_{\text{TP}}$, no coordinate is repeated more than twice, so that no mass other than $1/n$ and $2/n$ would occur in a statement of the probability measures with unique coordinates. We choose to represent with duplicate coordinates, so that the resulting measures all have exactly $n$ support points. This assumption in the definition of UC3P makes the arguments in Section \ref{sec:HardProof} a bit less technical.

\section{A Barycenter of UC3P with $G_{\text{TP}}$ Input}\label{sec:HardProof}

Throughout this section, we consider instances $\mathcal{U}$ of UC3P, with fixed sparsity bound $N$ and transport cost $\Phi$, that are associated with an instance of P3DM; that is, the input measures $P_1, P_2$, and $P_3$ for UC3P are constructed from a graph $G_{\text{TP}}$ from the instance of P3DM.

We begin this section with a description of a formula for the cost $c(t_j)$ of a single triple $t_j = (\x_1^j,\x_2^j,\x_3^j)$. Then we examine $G_{\text{TP}}$ for the minimum cost $c(t_j)$ of any triple; in turn, this cost and the structure of $G_{\text{TP}}$ allows the determination of the transport cost of a barycenter of $P_1, P_2$, $P_3$. We call an arbitrary triangle containing vertices from $G_{\text{TP}}$ a {\em feasible triple} if each of its vertices belongs to a different set $P_1$, $P_2$, $P_3$.

In the discrete setting, the squared Wasserstein distance is the total squared Euclidean distance between the support points of the combination measure and the support points of $P_1, \ldots, P_m$. For each tuple $s_j \in S^*$, the transport cost of a unit of mass is \cite{abm-16,bp-18}: \begin{equation}\label{eqn:origcost} c(s_j) = \sum_{i=1}^m \lambda_i ||\x^j - \x_i^j||^2. \end{equation} 

For measures provided as $P^*$, Equation (\ref{eqn:origcost}) is not an ideal formula for the cost associated with a tuple, as there are two steps of calculation: first, $\x^j = \sum_{i=1}^m \lambda_i \x_i^j$, which is not part of the input, and then $c(s_j)$. Instead, a formula directly using the distances eliminates the need to compute $\x^j$ and simplifies the arguments to follow. Specifically, we consider the formula \begin{equation}\label{eqn:newcost} c(s_j) = \sum_{i=1}^{m-1} \lambda_i \sum_{k=i+1}^m \lambda_k ||\x_k^j - \x_i^j||^2.\end{equation} 
We show, through simple algebraic manipulation, Equation (\ref{eqn:newcost}) equals Equation (\ref{eqn:origcost}). By symmetry, and then through repeated use of the assumption $\sum_{i=1}^m \lambda_i = 1$, we have
\begin{align*}&\sum_{i=1}^{m-1} \lambda_i \sum_{k=i+1}^m \lambda_k || \x_k^j - \x_i^j ||^2 \text{, which is $c(s_j)$ of Equation (\ref{eqn:newcost}),}\\
=&\frac{1}{2} \sum_{i=1}^m \lambda_i \sum_{k=1}^m \lambda_k || \x_k^j - \x_i^j ||^2 \\=& \frac{1}{2}  \sum_{i=1}^m \lambda_i  (\sum_{k=1}^m \lambda_k (\x_k^j)^T \x_k^j- 2(\x_i^j)^T \x^j + (\x_i^j)^T \x_i^j) \\ 
=& \frac{1}{2} \sum_{i=1}^m \lambda_i \sum_{k=1}^m \lambda_k (\x_k^j)^T \x_k^j-  \sum_{i=1}^m \lambda_i (\x_i^j)^T \x^j + \frac{1}{2}\sum_{i=1}^m \lambda_i  (\x_i^j)^T \x_i^j \\=& \sum_{i=1}^m \lambda_i (\x_i^j)^T \x_i - (\x^j)^T \x^j \\
=& \sum_{i=1}^m \lambda_i (\x^j)^T \x^j -2 \sum_{i=1}^m \lambda_i \x_i^T \x^j + \sum_{i=1}^m \lambda_i (\x_i^j)^T \x_i^j \\
=& \sum_{i=1}^m \lambda_i ((\x^j)^T \x^j -2(\x_i^j)^T \x^j + (\x_i^j)^T \x_i^j) \\
 =& \sum_{i=1}^m \lambda_i ||\x^j - \x_i^j||^2 \text {, which is $c(s_j)$ of Equation (\ref{eqn:origcost}).}
\end{align*} 
Therefore Equation (\ref{eqn:newcost}) can be used as an alternative to Equation (\ref{eqn:origcost}).

Since UC3P has exactly three input measures, the tuples $s_j$ in $S^*$ can be described as triples $t_j = (\x_1^j,\x_2^j, \x_3^j)$ with $\x_1 \in \supp(P_1)$,  $\x_2 \in \supp(P_2)$, and  $\x_3 \in \supp(P_3)$.  Each triple has associated transport cost \begin{equation*} c(t_j) = 1/9( ||\x_{2}^j-\x_{1}^j||^2+||\x_{3}^j-\x_{1}^j||^2+||\x_{3}^j-\x_{2}^j||^2).\end{equation*} 

Additionally, let $S_{\scalebox{0.45}{\hspace{-0.04in}$P^*$}}^*$ be the set of indices $j$ of elements $s_j \in S^*$ such that $\x^j$ is in $\supp(P^*)$. Then the transport cost associated with $P^*$ is
\begin{equation}\label{eqn:phi} \phi(P^*) = \sum_{j\in S_{\scalebox{0.45}{\hspace{-0.04in}$P^*$}}^*} c(s_j) w_j. \end{equation} For UC3P, the total cost function of a combination measure is \begin{equation*} \phi( P^*) = 1/9\sum_{j\in S_{\scalebox{0.45}{\hspace{-0.04in}$P^*$}}^*} c(t_j) w_j =1/9\sum_{j\in S_{\scalebox{0.45}{\hspace{-0.04in}$P^*$}}^*} ( ||\x_{2}^j-\x_{1}^j||^2+||\x_{3}^j-\x_{1}^j||^2+||\x_{3}^j-\x_{2}^j||^2) w_j.  \end{equation*}

With these formulas, the costs $c(t_j)$ of any triple $t_j$ can be calculated using only the distances between the points of the triple, and without computing the weighted mean $\x_j$. We are now ready to prove Lemma \ref{lm:mintrip}. \\

\setcounter{repc}{0}
\begin{lm2}[Minimum Cost Triple]
For any instance $\mathcal{U}$, the minimum cost among all possible triples $t_j$ is $c(t_j) = 50/9$.
\end{lm2}

\begin{proof}
We prove the claim through an examination of all low-cost triples in $G_{\text{TP}}$. We first consider triples with all three vertices in the same triangle path, as shown in Figure \ref{fig:pathpattern}.

By construction, the distance between any two vertices of $G_{\text{TP}}$ is at least 3. Using this side length, we consider two triangles: a right triangle with two legs of length 3, or an improper triangle with three points on a line (Figure \ref{fig:side3options}). However, a right triangle with both legs of length 3 always has two points in the same measure; see Figure \ref{fig:side3options} (left). Thus, no feasible triples with two legs of length 3 exist. The improper triangle can indeed appear in a triangle path, and has associated cost $1/9 (9+9+36) = 54/9 = 6$, shown in Figure \ref{fig:side3options} (right). Any improper triangles with lengths greater than 3 will have a strictly greater associated cost; for an example, see Figure \ref{fig:stut} (left).

Since the right triangle with both legs of length 3 is never feasible, we now consider the cost of the primary 3-4-5 triangle used throughout the graph, as shown in Figure \ref{fig:stut} (right). The cost associated with this triple is $1/9 (9+16+25) = 50/9$, less than the cost of the improper triangle of Figure \ref{fig:side3options} (right). 

Within a triangle path, any other triples contain two sides of at least 6 units. When a triangle has at least two sides of length 6, the corresponding cost exceeds $1/9(36+36+9) = 9$, and is always greater than the cost of the 3-4-5 triangle, $50/9$. Therefore the 3-4-5 triangle is the lowest cost triple within a single triangle path.

Now consider a (feasible) triple with at least two vertices that do not belong to the same triangle path. Then one of two cases hold:
\begin{enumerate}
\item The triple contains an element point. Then by construction, the lowest cost possible triple is a 3-4-5 triangle, with cost $50/9$, as before. Such triples do exist; for an example see the lower right of the element point in Figure \ref{fig:threepath}.
\item The triple does not contain an element point. By construction, two vertices in the triple must be at least 6 units apart, and the associated cost must exceed that of the 3-4-5 triangle. 
\end{enumerate}

Having exhausted all possibilities, triples $t_j$ associated with a 3-4-5 triangle have the lowest possible cost, $c(t_j) = 50/9$.
 \end{proof}

\begin{figure}[!t]
\begin{center}
\begin{tikzpicture}[scale=0.5]

\draw [thick](-2,6)--(-2,9) node [midway, left]{$3$};
\draw [thick](-2,6)--(1,6) node [midway, below]{$3$};
\draw[thick] (-2,9)--(1,6);

\fill [cyan] (1, 6) circle (6pt) node [anchor = north west]{$P_3$};
\fill [red] (-2,6) circle (6pt) node [anchor = north east]{$P_2$};
\fill [cyan] (-2,9) circle (6pt) node [anchor = south east]{$P_3$};
\end{tikzpicture}\qquad
\begin{tikzpicture}[scale=0.5]

\draw [thick](-2,6)--(-5,6) node [midway, below]{$3$};
\draw [thick](-2,6)--(1,6) node [midway, below]{$3$};
\fill [cyan] (1, 6) circle (6pt) node [anchor = north west]{$P_3$};
\fill [red] (-2,6) circle (6pt) node [anchor = north]{$P_2$};
\fill [blue] (-5,6) circle (6pt) node [anchor = north east]{$P_1$};
\end{tikzpicture}

\end{center}
\caption{(left) All occurrences of a right triangle with two legs of length 3 have two same-colored points. Therefore no such triples are feasible. (right)  An improper triangle with side lengths 3, 3, and 6. The total cost of such a triple is $1/9 (9+9+36) = 54/9 = 6$.}\label{fig:side3options}
\end{figure}
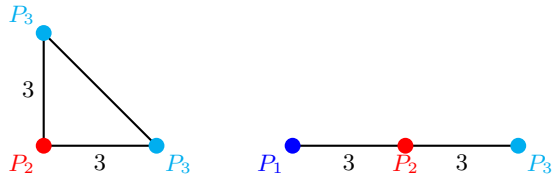

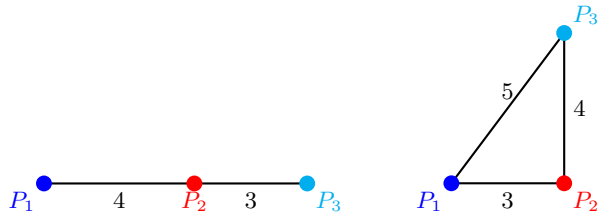
\begin{figure}
\begin{center}
\begin{tikzpicture}[scale=0.5]
\draw [thick](-2,6)--(-6,6) node [midway, below]{$4$};
\draw [thick](-2,6)--(1,6) node [midway, below]{$3$};
\fill [cyan] (1, 6) circle (6pt) node [anchor = north west]{$P_3$};
\fill [red] (-2,6) circle (6pt) node [anchor = north]{$P_2$};
\fill [blue] (-6,6) circle (6pt) node [anchor = north east]{$P_1$};

\end{tikzpicture}
\qquad
\begin{tikzpicture}[scale=0.5]

\draw [thick](1,2)--(-2,2) node [midway, below]{$3$};
\draw [thick] (1,2)--(1,6) node [midway, right]{$4$};
\draw [thick] (1,6)--(-2,2)node [midway, left, above]{$5$};
\fill [red] (1,2) circle (6pt) node [anchor = north west]{$P_2$};
\fill [blue] (-2,2) circle (6pt) node [anchor = north east]{$P_1$};
\fill [cyan] (1, 6) circle (6pt) node [anchor = south west]{$P_3$};
\end{tikzpicture}
\end{center}
\caption{(left) Compared to the improper triangle of Figure \ref{fig:side3options} (right), the cost associated with this improper triangle is strictly worse. (right) The cost associated with this triple is $1/9 (9+16+25) = 50/9$, which is the minimum among all triples. }\label{fig:stut}
\end{figure}

Since a 3-4-5 triangle is always the preferred choice of triple due to its minimum cost, in the following, we simply refer to a 3-4-5 triangle as a triangle. The minimum cost of $c(t_j)$ from Lemma \ref{lm:mintrip} allows the construction of a barycenter that gives the following results.

\setcounter{repc}{3}
\begin{thm2}[Transport Cost of a Barycenter] For an instance $\mathcal{U_Y}$, a barycenter has transport cost $\phi(\bar P) = 50/9$. \end{thm2}


\setcounter{repc}{0}
\begin{cor2}[Existence of a Sparse Barycenter] For an instance $\mathcal{U_Y}$, there exists a barycenter with $n$ support points. \end{cor2}

\begin{proof}[{Proof of Theorem \ref{thm:baryvalue} and Corollary \ref{cor:barysparse}.}]  
Let an instance of P3DM have answer YES, and construct $G_{\text{TP}}$ and $P_1$, $P_2$, and $P_3$ as described in Section \ref{sec:G2}. In a YES-instance of P3DM, there exists a set $T'$ where each element appears in exactly one triple. Fix such a set $T'$.

For all element points, assign mass $1/n$ to the first triangle in the triangle path leading to the triple in which the element point is represented in $T'$. Proceed through the triangle path, assigning mass $1/n$ to alternating triangles, so that this is an alternating triangle path which includes the element point.

For all other paths (those leading from element points to triples not in $T'$), assign mass $1/n$ to the second triangle in the path, that is, the first triangle not containing the element point. Again, complete the alternating triangle path by assigning mass $1/n$ to alternating triangles. 

Such a mass assignment gives a measure $P^*$  that satisfies the mass transport requirements of all points in $P_1, P_2, P_3$. Furthermore, all mass is assigned to a triple $t_j$ of minimum cost $c(t_j) = 50/9$, so $\phi(P^*)$ is minimal. Since no lower transport cost is possible, $P^*$ is a barycenter, and has transport value:
\begin{equation*}
\phi(P^*) = \sum_{j=1}^{n} \frac{50}{9}( \frac{1}{n}) = \frac{50}{9} \sum_{j=1}^n \frac{1}{n}  =  \frac{50}{9}. 
\end{equation*}

This proves Theorem \ref{thm:baryvalue}. The measure $P^*$ contains $n$ support points, one for each triangle. Since it is a barycenter, the existence of measure $P^*$ gives Corollary \ref{cor:barysparse}.  \end{proof} 
\begin{figure}[t]
\begin{center}
\fbox{\begin{tikzpicture}[scale=0.5]


\draw [thick](1,2)--(-0.5,2);
\draw [thick] (1,2)--(1,4);
\draw [thick] (1,4)--(-0.5,2);

\draw [thick](-0.5,4)--(-3,4);
\draw [thick] (-0.5,4)--(-0.5,5.5);
\draw [thick](-0.5,4)--(1,4);
\draw [thick] (-0.5,4)--(-0.5,2);
\draw [thick] (-0.5,5.5)--(-3,4);

\draw [thick](-3,5.5)--(-0.5,5.5);
\draw [thick] (-3,5.5)--(-3,4);


\draw [thick](1,2)--(2.5,2);
\draw [thick] (2.5,2)--(2.5,4);
\draw [thick] (1,4)--(2.5,2);
\draw[thick] (1,4)--(2.5,4);

\draw[thick](4.5,4)--(2.5,4);
\draw[thick](2.5,4)--(2.5,5.5);
\draw[thick](4.5,4)--(2.5,5.5);
\draw[thick](4.5,4)--(4.5,5.5);
\draw[thick](2.5,5.5)--(4.5,5.5);

\fill [white] (-2.5,-2.5) circle;
\fill [red] (-0.5,4) circle (6pt) node [anchor = north east]{$P_2$};
\fill [red] (1,2) circle (6pt) node [anchor = north]{$P_2${\color{black}\tiny(e)}};
\fill [blue] (-0.5,2) circle (6pt) node [anchor = north east]{$P_1$};
\fill [cyan] (1, 4) circle (6pt) node [anchor = south]{$P_3${\tiny (2)}};
\fill [blue] (-3,4) circle (6pt) node [anchor = north east]{$P_1$};
\fill [cyan] (-0.5, 5.5) circle (6pt) node [anchor = south west]{$P_3$};
\fill [red] (-3,5.5) circle (6pt) node [anchor = south east]{$P_2$};
\fill [red] (2.5,4) circle (6pt) node [anchor = north west]{$P_2$};
\fill [blue] (2.5,2) circle (6pt) node [anchor = north west]{$P_1$};
\fill [blue] (4.5, 4) circle (6pt) node [anchor = north west]{$P_1$};
\fill [cyan] (2.5, 5.5) circle (6pt) node [anchor = south east]{$P_3$};
\fill [red] (4.5,5.5) circle (6pt) node [anchor = south west]{$P_2$};
\end{tikzpicture}}
\qquad
\fbox{\begin{tikzpicture}[scale=0.5]

\draw [thick](1,2)--(-0.5,2);
\draw [thick] (1,2)--(1,4);


\draw [thick](1,2)--(2.5,2);
\draw [thick] (2.5,2)--(2.5,4);
\draw [thick] (1,4)--(2.5,2);
\draw[thick] (1,4)--(2.5,4);

\draw[thick](4.5,4)--(2.5,4);
\draw[thick](2.5,4)--(2.5,5.5);
\draw[thick](4.5,4)--(2.5,5.5);
\draw[thick](4.5,4)--(4.5,5.5);
\draw[thick](2.5,5.5)--(4.5,5.5);


\draw[thick](1,2)--(1,0);
\draw[thick](1,0)--(-0.5,2);
\draw [thick] (1,0)--(-0.5,0);
\draw [thick] (-0.5,0)--(-0.5,2);

\draw[thick] (-2.5,-1.5)--(-0.5,-1.5);
\draw[thick] (-2.5,-1.5)--(-2.5,0);
\draw[thick] (-0.5,-1.5)--(-2.5,0);
\draw[thick] (-2.5,0)--(-0.5,0);
\draw[thick] (-0.5,-1.5)--(-0.5,0);

\fill [red] (2.5,4) circle (6pt) node [anchor = north west]{$P_2$};
\fill [blue] (2.5,2) circle (6pt) node [anchor = north west]{$P_1$};
\fill [blue] (4.5, 4) circle (6pt) node [anchor = north west]{$P_1$};
\fill [cyan] (2.5, 5.5) circle (6pt) node [anchor = south east]{$P_3$};
\fill [red] (4.5,5.5) circle (6pt) node [anchor = south west]{$P_2$};
\fill [cyan] (1,0) circle (6pt) node [anchor = north west]{$P_3$};
\fill[red] (-0.5,0) circle (6pt) node [anchor = north west]{$P_2$};
\fill [red] (1,2) circle (6pt) node [anchor = north west]{$P_2${\color{black}\tiny(e)}};
\fill [blue] (-0.5,2) circle (6pt) node [anchor = south east]{$P_1$};
\fill [cyan] (1, 4) circle (6pt) node [anchor = south east]{$P_3$};
\fill[blue] (-2.5,0) circle (6pt) node [anchor = south east]{$P_1$};
\fill[cyan] (-0.5,-1.5) circle (6pt) node [anchor = north west]{$P_3$};
\fill[red] (-2.5,-1.5) circle (6pt) node [anchor = north east]{$P_2$};
\end{tikzpicture}}

\end{center}
\caption{When two paths leave an element point, two possible configurations exist. (left) When the paths are adjacent, the element point does not belong to any minimum cost triangles that do not belong to a single path. (right) When the paths lie opposite, as in this configuration, the element point belongs to two minimum cost triangles that have other vertices from two paths. }\label{fig:twopath}
\end{figure}
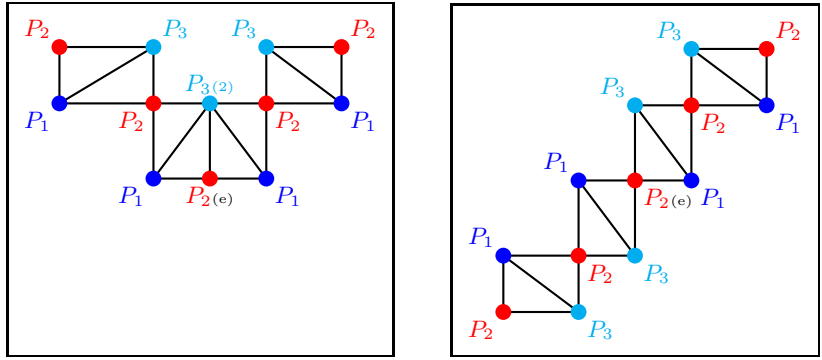

We now show that the combinations in a combination measure satisfying the requirements of UC3P in $\mathcal{U}$ create a pattern of alternating triangles.

\setcounter{repc}{4}
\begin{thm2}[Barycenters Create a Pattern of Alternating Triangles]
For an instance $\mathcal{U}$ that has a barycenter $P$ of $n$ support points and transport cost $50/9$, $P$ creates a pattern of alternating triangles. 
\end{thm2}

\begin{proof}
Let $P$ be a barycenter of $n$ support points and transport cost $50/9$ of an instance $\mathcal{U}$. Then each support point of $P$ must have mass $1/n$ in order to satisfy the non-mass-splitting property, and all corresponding triples in the transport plan must correspond to (3-4-5) triangles. We first show that for all element points, mass must be assigned to a triangle consisting of the element point and two vertices in the same triangle path; equivalently, for all element points, mass is assigned to the first triangle in exactly one triangle path. Then we show that this mass assignment creates alternating triangle paths.

Because $P$ has a total transport cost of $50/9$, each element point must be assigned to a triangle with cost $50/9$. In order for an element point to be a vertex of a minimum cost triangle, the element point must be incident to at least one triangle path. In P3DM, this corresponds to the requirement that every element must appear in at least one triple of $T'$, a necessary condition for a YES-instance. Thus every element point must be incident to one, two, or three triangle paths. Let $e$ be any element point.
\begin{enumerate} 
\item[\textbf{I.}] \textbf{Suppose exactly one triangle path starts at $e$.} Then mass $1/n$ must be assigned to the first triangle in the triangle path, as it is the only triple of minimum cost that contains the element point. 

\item[\textbf{II.}] \textbf{Suppose exactly two triangle paths start at $e$.} Then either: 
\begin{enumerate}
\item[1.] The two paths start with triangles which share the element point and a duplicate vertex (2); see Figure \ref{fig:twopath} (left). In this case, there are no (3-4-5) triangles with vertices in multiple paths, so mass $1/n$ must be assigned to the first triangle in exactly one of the paths. 
\item[2.] The two paths start on opposite corners of the element point and only share the element point; see Figure \ref{fig:twopath} (right). In this case, the element point belongs to four minimum cost triangles: two at the start of the two paths, and two triangles which contain the element point and one vertex from each of the two paths, hereafter called an off-path triangle. Suppose mass $1/n$ is assigned to an off-path triangle. 

Then mass cannot be assigned to either of the two triangles at the start of the paths, since the element point already has mass. Mass cannot be assigned to either of the second triangles in the paths, since the other two vertices of the selected off-path triangle have also been assigned mass. This leaves one vertex from the first triangle in each of the two paths without mass. 

These two vertices cannot be in the same triangle, as there is no available triangle of minimum cost containing both vertices. There are also no minimum cost triangles containing each vertex individually, using vertices from the third and fourth triangles in the path, due to the assumption that paths have horizontal orientation on the second pair of triangles. Therefore mass cannot be assigned to an off-path triangle, as doing so necessitates a mass assignment to a triple of cost higher than $50/9$. 
\end{enumerate} 

\item[\textbf{III.}]  \textbf{Suppose exactly three triangle paths start at the element point.} Then there is exactly one off-path triangle; recall Figure \ref{fig:threepath}. Suppose mass is assigned to the off-path triangle. 

Then, as in Case II.2, there are two vertices from the first triangle in two triangle paths that still require mass. Both of these vertices must be duplicates; that is, duplicated in the support sets of their $P_i$, and labeled (2) in Figure \ref{fig:threepath}. One minimum cost triangle exists with as yet unassigned mass for these two vertices: the second triangle in the triangle path whose vertices, other than the element point, are not yet assigned to a triple. However, by the same argument as in II.2, there is no other minimum cost triangle available for these two points.
\end{enumerate}

\noindent Having examined all possible path configurations in I, II, and III, when there exists a minimum sparsity barycenter with transport cost $50/9$, mass must be assigned to the first triangle in exactly one triangle path at each element point.

Next, consider an element point $e$, and let mass be assigned to the first triangle in a triangle path at $e$. There is only one case where the mass transport requirements of $P_1$, $P_2$, and $P_3$ allow for mass to be assigned to the second triangle in that path: when the element point $e$ belongs to the first triangle in three triangle paths, and the other two vertices of the first triangle are both duplicates (essentially, mass is assigned to the first triangle of the ``middle path''). As in II.2, two vertices from the first triangle in the other two triangle paths have no available minimum cost triangle. Therefore in all cases, mass cannot be assigned to the second triangle in a triangle path when the first triangle is assigned mass. Mass must be assigned to the third triangle in the triangle path.

Any other triangle paths which begin at $e$ cannot have mass assigned to the first triangle, since mass transport to the element point has been satisfied. Then the only available minimum cost triangle for the other two vertices in the first triangle is the second triangle in the triangle path, so mass must be assigned to the second triangle. 

Other off-path triangles may occur along each path; however, mass cannot be assigned to them, following the same arguments as in II.2: assigning mass to an off-path triangle necessitates the inclusion of a triangle with cost greater than $50/9$. Therefore all paths are alternating triangle paths, and the combinations from $P$ have produced a pattern of alternating triangles. 

Therefore, if there exists a minimum sparsity barycenter with transport cost $50/9$, the combinations contained in the barycenter create a pattern of alternating triangles in graph $G_{\text{TP}}$.  \end{proof}



The reduction of P3DM to UC3P follows from the previous results.

\begin{thm2}[P3DM Reduction to UC3P] For an instance $\mathcal{U}$ with $N = n$ and $\Phi = 50/9$, UC3P has answer YES if and only if P3DM has answer YES.
\end{thm2}

\begin{proof}
$(\Rightarrow)$ Let an instance $\mathcal{U}$ have a barycenter with $n$ support points and $50/9$, so that UC3P has answer YES for $N \geq n$ and $\Phi \geq 50/9$. By Theorem \ref{thm:sparsealt}, this barycenter must create a pattern of alternating triangles. By Proposition \ref{prop:P3DMalt}, P3DM must also have answer YES. 

$(\Leftarrow)$ Suppose P3DM has answer YES, construct a corresponding graph $G_{\text{TP}}$ and instance $\mathcal{U_Y}$. Then by Theorem \ref{thm:baryvalue} and Corollary \ref{cor:barysparse}, a barycenter for UC3P with input measures $P_1, P_2, P_3$ from $G_{\text{TP}}$ has value $50/9$, and at least one barycenter exists with $n$ support points. So for $N \geq n$ and $\Phi \geq 50/9$, UC3P has answer YES.
 \end{proof}

SCMP is immediately known to be NP-hard, since UC3P is NP-hard. The result that UC3P is NP-hard also implies the following.

\setcounter{repc}{2}
\begin{thm2}
For $m\geq 3$ and $d \geq 2$, an efficient algorithm for SBP cannot exist, unless P = NP. 
\end{thm2}
\begin{proof}
Suppose an efficient algorithm for solving SBP exists, and consider SBP for the measures $P_1$, $P_2$, and $P_3$ from the graph $G_{\text{TP}}$ of an instance of P3DM. Then for a given sparsity $N$, an efficient decision on the existence of a barycenter with sparsity at most $N$ can be made by simply computing a barycenter. Following the same arguments regarding the creation of a pattern of alternating triangles, SBP has a barycenter of sparsity at most $N$ if and only if P3DM has answer YES. Thus, if an efficient algorithm for SBP exists, then an efficient decision for P3DM can be made. So an efficient algorithm for SBP must not exist, unless P = NP.
 \end{proof}

\section{Challenges of Proving Containment in NP and Verifying Optimality}\label{sec:open}

In this paper, we proved that UC3P is NP-hard (see Sections \ref{sec:proofstrat} to \ref{sec:HardProof}). In doing so, we saw that the more general SCMP is NP-hard, too, and that there cannot exist an efficient algorithm for the sparse barycenter problem (SBP), unless $P=NP$. 

We now turn to the question ``Is SCMP NP-Complete?'' and the corresponding implications for SBP. In Section \ref{subsec:SCMPNP}, we examine the conditions for an efficient verification of a combination measure provided for SCMP. Then we extend the discussion to verifying a barycenter in Section \ref{subsec:SBPver}. The efficiency of any verification is determined relative to the encoding size of ``the input'', which refers to  different measures depending on the context. We use the phrasing ``relative to'' when specifying the complexity or ``efficient relative to'' or when specifying the {\em polynomial} complexity with respect to the encoding size of a measure. As we require knowledge of the efficiency with respect to the encoding size of the {\em original} input measures $P_1, \ldots, P_m$, for convenience, we let $L$ represent the largest encoding size of $P_1, \ldots, P_m$. Recall that we assume that the encoding size of the original input is dominated by the size of the measures (and not $N$ or $\lambda$). In particular, it is linear in $L$.




\subsection{Challenges of Proving SCMP is in NP}\label{subsec:SCMPNP}

To verify a combination measure satisfies the requirements of SCMP, we need to check that a given measure has a support set of size below bound $N$ and a non-mass-splitting transport cost below bound $\Phi$. For any measure (even if it is not a combination measure), checking the sparsity -- that is, the size of its support set -- is trivial. Therefore the sparsity requirement of SCMP is not problematic for determining if SCMP is NP. 


So let us consider the computation of the transport cost $\phi(P^*)$, as verifying that the transport cost is below the bound $\Phi$ implies the computation of the transport cost itself. 

\begin{lemma}\label{lm:phiPstar} Let $P_1, \ldots, P_m$ be measures with support points in $\Q^d$, masses in $\Q$, and maximum encoding size $L$. Let  $\lambda $ be a corresponding weight vector in $\Q^m$. Let $P^*$ be an associated combination measure with masses in $\Q$.

Then the transport cost $\phi(P^*)$ can be computed efficiently relative to $P^*$. When $P^*$ has at most $N$ support points and encoding size polynomial in $L$, then $\phi(P^*)$ can be computed efficiently relative to $P_1, \ldots, P_m$. \end{lemma}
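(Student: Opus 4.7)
The plan is to compute $\phi(P^*)$ directly via Equations \ref{eqn:phi} and \ref{eqn:newcost}. Since $P^*$ is provided as a list of tuples $s_j \in S^*$ with rational masses $w_j$, I would iterate over each tuple $s_j = (\x_1^j,\ldots,\x_m^j)$ in $\supp(P^*)$, look up the coordinates $\x_i^j$ from $P_i$, form the $\binom{m}{2}$ pairwise squared Euclidean distances $\|\x_k^j-\x_i^j\|^2$, weight each by $\lambda_i\lambda_k$, and sum to obtain $c(s_j)$. Multiplying by $w_j$ yields the per-tuple contribution, and aggregating over $j \in S^*_{\scalebox{0.45}{\hspace{-0.04in}$P^*$}}$ yields $\phi(P^*)$. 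The key point is that no computation of weighted means $\x^j$ is needed, since Equation \ref{eqn:newcost} expresses $c(s_j)$ entirely in terms of the coordinates already appearing in the tuple.

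For the first claim, the number of tuples processed equals $|\supp(P^*)|$, which is bounded by the encoding size of $P^*$, and each tuple requires $O(m^2 d)$ rational arithmetic operations. The operands -- the coordinates of the $\x_i^j$, the entries of $\lambda$, and the masses $w_j$ -- all have bit sizes polynomial in the encoding sizes of $P_1,\ldots,P_m$, $\lambda$, and $P^*$. Standard bit-complexity bounds for rational arithmetic then give an overall polynomial running time relative to $P^*$. The second claim follows as an immediate corollary: under the hypothesis that $|\supp(P^*)| \leq N$ and the encoding size of $P^*$ is polynomial in $L$, the procedure above executes in time polynomial in $L$, because $L$ already dominates the encoding sizes of $P_1,\ldots,P_m$ and $\lambda$.

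The one delicate step will be controlling bit-size growth during the summations. Each individual product $c(s_j) w_j$ is a rational whose denominator is built from the denominators of coordinates, weights, and masses, so its bit size is polynomial in $L$ together with the encoding size of $P^*$. Summing $|\supp(P^*)|$ such terms could in principle multiplicatively blow up the common denominator, and this is the point at which care is required. However, since $|\supp(P^*)| \leq N \leq \sum_{i=1}^m |P_i| - m + 1$ is itself polynomial in $L$, the cumulative bit size of the sum remains polynomial, and the final reduction to lowest terms can also be performed in polynomial time via a single \textsc{gcd} computation. This is the step where the hypothesis bounding $|\supp(P^*)|$ is substantively used; without it, one could only bound the bit size in terms of $|\supp(P^*)|$ itself, recovering only the first (weaker) claim.
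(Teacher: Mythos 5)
Your proposal is correct and follows essentially the same route as the paper's proof: a direct evaluation of Equation \ref{eqn:phi} using the pairwise-distance formula of Equation \ref{eqn:newcost}, with an operation count of $|\supp(P^*)|\cdot\frac{m(m-1)}{2}$ rational distance computations and the observation that sparsity plus the polynomial encoding-size hypothesis transfers efficiency from ``relative to $P^*$'' to ``relative to $P_1,\ldots,P_m$.'' The only differences are cosmetic: the paper also verifies the alternative path through Equation \ref{eqn:origcost} (computing weighted means first), and your explicit accounting of bit-size growth in the final summation makes precise a point the paper leaves implicit.
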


\begin{proof} The formula for $\phi(P^*)$ is given in Equation (\ref{eqn:phi}) and takes as input the transport costs $c$ and the masses associated with the tuples of $P^*$. In Section \ref{sec:HardProof}, we give two formulas for $c$ in Equation (\ref{eqn:origcost}) and Equation (\ref{eqn:newcost}). We show that both strategies are efficient.

First consider Equation (\ref{eqn:origcost}). The weighted means, one for each tuple of $P^*$, can be computed efficiently relative to $P^*$. Then, for each tuple, the weighted distances from the weighted mean to each of the points in the tuple are computed and totaled; this requires $|P^*| \cdot m$ weighted distance calculations. This is efficient relative to $P^*$. 

Now consider Equation (\ref{eqn:newcost}). We compute the squared Euclidean distances between the elements of the tuple, which are support points in $P_1, \ldots, P_m$. Since the support points are rational, so too are the resulting distances. Each tuple requires $\frac{m(m-1)}{2}$ distance computations, so the total number of computations is $|P^*|\cdot (\frac{m(m-1)}{2})$, efficient relative to $P^*$.

Then in Equation (\ref{eqn:phi}), the costs $c$ are multiplied by the masses associated with each tuple; since the masses are rational by assumption, this computation is efficient relative to $P^*$. 

Now suppose $P^*$ is sparse. In Equation (\ref{eqn:origcost}), the number of weighted means computations $N$ and the resulting number of weighted distance computations $Nm$ are efficient relative to $P_1, \ldots, P_m$. Using Equation (\ref{eqn:newcost}), the total number of computations is now $N (\frac{m(m-1)}{2})$, which is efficient relative to $P_1, \ldots, P_m$. 

Finally, when $P^*$ has encoding size polynomial in $L$, the computation of $\phi(P^*)$ is efficient relative to $P_1, \ldots, P_m$.
 \end{proof}

%
%
%
%

Lemma \ref{lm:phiPstar} implies that a proposed solution $P^*$ for SCMP with sufficiently small encoding size can be verified efficiently.

\begin{theorem}\label{thm:effSmallRat} A combination measure $P^*$ supplied as a possible solution to SCMP can be verified efficiently if the measure has an encoding size polynomial in $L$.
\end{theorem}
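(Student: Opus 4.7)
The plan is to reduce the verification of SCMP for a candidate $P^*$ to three independent checks: (i) that $P^*$ is a genuine combination measure for $P_1,\ldots,P_m$ (i.e., every tuple is drawn from $S^*$, masses are nonnegative and sum to $1$, and the implied transport satisfies the marginal requirements of each $P_i$), (ii) that $|\supp(P^*)|\le N$, and (iii) that $\phi(P^*)\le \Phi$. I will then argue that each of these steps runs in time polynomial in the encoding size of $P^*$ and in $L$, so under the hypothesis that the encoding of $P^*$ is polynomial in $L$, the entire verification runs in time polynomial in $L$.

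For (i), since $P^*$ is specified as a set of tuples $t_j = (\mathbf{x}_1^j,\ldots,\mathbf{x}_m^j)$ together with rational masses $w_j$, I would first confirm by direct lookup that $\mathbf{x}_i^j \in \supp(P_i)$ for each $i$ and $j$, then check that the masses are nonnegative and sum to $1$, and finally verify the marginal condition that for every $i$ and every $\mathbf{y}\in \supp(P_i)$ the sum of $w_j$ over tuples with $\mathbf{x}_i^j = \mathbf{y}$ equals the mass of $\mathbf{y}$ in $P_i$. Because each tuple already prescribes a single destination in each $P_i$, the non-mass-splitting property is baked into the representation and no additional check is needed. These tasks require a polynomial number of rational comparisons and sums on numbers whose bit lengths are bounded by the encodings of $P^*$ and $P_1,\ldots,P_m$, so the step is polynomial in $L$ under the stated hypothesis.

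For (ii), I would compute the weighted mean $\mathbf{x}^j = \sum_{i=1}^m \lambda_i \mathbf{x}_i^j$ for each of the at most $|P^*|$ tuples and count the number of distinct such means; this involves $O(|P^*|\,m\,d)$ rational operations on numbers whose encodings are polynomial in $L$. Comparing the resulting cardinality to $N$ then settles the sparsity requirement.

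For (iii), I would invoke Lemma \ref{lm:phiPstar} directly: the hypothesis that $P^*$ has encoding size polynomial in $L$ is exactly what the lemma needs in order to conclude that $\phi(P^*)$ is computable in time polynomial in $L$; a single rational comparison against $\Phi$ then finishes the check. The only place I expect to spend care is showing that the intermediate rationals arising in steps (i)--(iii) do not blow up in bit length, but this is immediate since each computation is a polynomial-length sequence of sums and products of rationals of encoding size at most $L$ plus that of $P^*$, both polynomial in $L$ by assumption. Combining the three checks yields a polynomial-time verification, establishing the theorem.
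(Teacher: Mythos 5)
Your proposal is correct and follows essentially the same route as the paper: the sparsity check is a trivial count, and the cost check is an immediate invocation of Lemma \ref{lm:phiPstar} followed by a single comparison against $\Phi$. The only difference is that you additionally spell out the verification that $P^*$ is a valid combination measure (tuple membership in $S^*$ and marginal feasibility), which the paper treats as implicit in the fact that the measure is \emph{provided} as a combination measure.
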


\begin{proof}
Since the measure is provided as a combination measure, the non-mass-split of the transport is a given. Thus, the efficient verification of the transport cost is an immediate consequence of Lemma \ref{lm:phiPstar}, since the sparsity check is trivial and all that remains is to compare the efficiently computed $\phi(P^*)$ to the bound $\Phi$. 
 \end{proof}

If all sparse combination measures would satisfy the encoding size assumption of Lemma \ref{lm:phiPstar}, then we could conclude that SCMP is in NP. However, it remains to address the possible mass assignments. Even though the input masses are in $\Q$, there may exist (sparse) barycenters with rational masses exceeding any given bound on the size of the encoding, and even with irrational mass. We exhibit a simple example.

\begin{example}\label{ex:1} Suppose we have two measures with at least two support points that lie on the vertices of a square, and any other support points sufficiently far away that the optimal transport contains tuples with support points from multiple vertices of the square. Such a configuration is shown in Figure \ref{fig:irrbary}. The support points have equal mass $d$ (if there are no other support points in the measures, $d = 1/2$). 

\begin{figure}[t]
\begin{center}
\begin{tikzpicture}[scale=0.5]

\fill [red] (6,6) circle (6pt) node [anchor = south west]{$P_1$};
\fill [blue] (0,6) circle (6pt) node [anchor = south east]{$P_2$};
\fill [blue] (6,0) circle (6pt) node [anchor = north west]{$P_2$};
\fill [red]  (0,0) circle (6pt) node [anchor = north east]{$P_1$};

\fill (3,0) circle (5pt) node [anchor = north]{$\bar P$};
\fill (3,6) circle (5pt) node [anchor = south]{$\bar P$};
\fill (0,3) circle (5pt) node [anchor = east]{$\bar P$};
\fill (6,3) circle (5pt) node [anchor = west]{$\bar P$};

\end{tikzpicture}\qquad\qquad\qquad\qquad
\begin{tikzpicture}[scale=0.5]

\fill [red] (6,6) circle (6pt) node [anchor = south west]{$d$};
\fill [blue] (0,6) circle (6pt) node [anchor = south east]{$d$};
\fill [blue] (6,0) circle (6pt) node [anchor = north west]{$d$};
\fill [red] (0,0) circle (6pt) node [anchor = north east]{$d$};

\fill (3,0) circle (5pt) node [anchor = north]{$b$};
\fill (3,6) circle (5pt) node [anchor = south]{$b$};
\fill (0,3) circle (5pt) node [anchor = east]{$d-b$};
\fill (6,3) circle (5pt) node [anchor = west]{$d-b$};

\draw[thick,->] (0,3)--(0,0.15);
\draw[thick,->] (0,3)--(0,5.85);
\draw[thick,->] (6,3)--(6,0.15);
\draw[thick,->] (6,3)--(6,5.85);
\draw[thick,->] (3,0)--(0.15,0);
\draw[thick,->] (3,0)--(5.85,0);
\draw[thick,->] (3,6)--(0.15,6);
\draw[thick,->] (3,6)--(5.85,6);

\end{tikzpicture}
\end{center}
\caption{(left) Two measures $P_1$ and $P_2$ with $|P_1| = |P_2| = 2$ and uniformly distributed mass. The measure $\bar P$ has four possible support points. (right) $\bar P$ is a barycenter for this input for any value $b \in [0,d]$.}\label{fig:irrbary}
\end{figure}
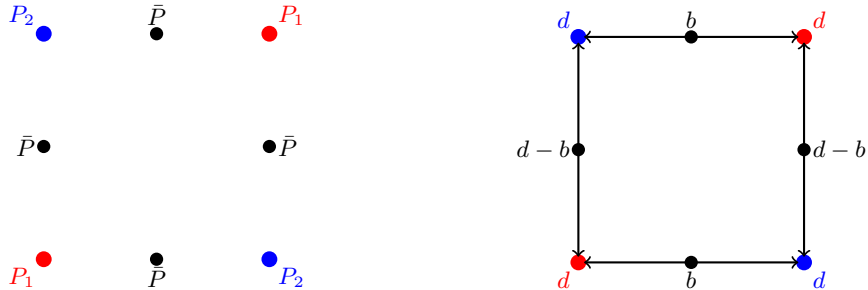

Then four weighted means with identical minimum costs $c$ exist, and any balanced mass assignment, that is, $b \in [0,d] \in \mathbb{R}$, has the same minimum total transport cost. This shows that there exist rational barycenters of arbitrary encoding size (and, less noteworthy, even irrational barycenters).  
\end{example} 

This example is easily extended to three or more measures to fit the requirements of UC3P and SCMP. Recall that the barycenter problem can be modeled and solved through various exponentially-scaling linear programs \cite{bp-18}. Situations like in Example \ref{ex:1} arise when the optimal face of the underlying polyhedron is of dimension one or higher. The vertices of this optimal face have a smaller support and there are guaranteed upper bounds on the size of a bit encoding of the vertices of a polyhedron (linear programming is known to be in NP). However, due to the exponential scaling of the linear programs (depending on $S^*$ or $S$ and independently of the measure provided as a possible barycenter), a linear programming-based approach does not give an efficient way to transition to a measure of such better properties. Since the encoding size of solutions to SCMP could be arbitrarily large relative to the encoding size of the original input, it remains open whether SCMP is in NP.

\subsection{Further Challenges for SBP}\label{subsec:SBPver}

Recall that all optimal transport plans for barycenters are non-mass-splitting, so all barycenters can be represented as combination measures. Thus the challenge of verifying a solution to SCMP is also an immediate challenge for verifying a solution to SBP, as a combination measure's encoding size may be arbitrarily large relative to $L$. 

We now highlight an additional challenge for verifying a solution to SBP: it remains open whether one can efficiently verify that a given barycenter, in fact, is a barycenter. To do so, one would have to rule out the existence of a measure with strictly better transport cost. As in the discussion above, at first glance, the various linear programs to find a barycenter seem like a promising tool. Barycenters lie on the boundary of the underlying polytopes and sparsest barycenters are vertices. For general linear programs, there are several ways to efficiently verify whether a given point is optimal through some simple algebra, for example through checking whether the objective function vector lies in the cone of outer normals of the point or whether a corresponding dual solution has the same value. However, the same issue persists: the scaling of these linear programs depends on the sizes of $S$ and $S^*$, which generally can be of exponential size -- independently of the sparsity of the given measure. Additionally, at this time it is even open whether one can efficiently verify that a given measure is contained in the set of uniqued weighted means without some extra information. 

However, if we can find an optimal transport plan of a measure and determine it is mass-splitting, we can eliminate that measure as a potential barycenter. We now examine the efficiency of computing an optimal transport plan since barycenters are often provided without a transport plan; that is, we assume we have a measure $P$ given as a set of support points and corresponding masses. We note that if a non-mass-splitting transport plan is also given or can be computed for any subset of $S$, it is trivial to go to representation $P^*$.

An optimal transport plan for $P$ can be determined through a linear program of \cite{abm-16}. In the linear program, the objective function uses the {\em weighted distances} from each support point $\x^j$ of $P$ and every support point $\x_i$ in $P_1, \ldots, P_m$. That is, we must calculate 
\begin{equation}\label{eqn:weighteddist} \lambda_i || \x^j - \x_i||^2\end{equation} for each $\x^j$ and all $\x_i$ in the support of $P_i$, $i = 1, \ldots, m$. The following lemma states that computing the necessary weighted distances for the linear program is efficient relative to the measure $P$; recalling the bound $N$ on the number of support points of a `sparse' measure, the computation is efficient relative to $P_1, \ldots, P_m$ when $P$ is sparse. 

\begin{lemma}\label{lm:sparsec} Let $P_1, \ldots, P_m$ be measures with support points in $\Q^d$ and masses in $\Q$, and a corresponding weight vector $\lambda \in \Q^m$. Let $P$ be another measure with support points in $\Q^d$. 

The weighted distances from the support points of $P$ to all support points of  $P_1, \ldots, P_m$ can be computed efficiently relative to $P$ and $P_1, \ldots, P_m$. If $P$ has at most $N$ support points and encoding size polynomial in $L$, the weighted distances from $P$ to $P_1, \ldots, P_m$ can be computed efficiently relative to $P_1, \ldots, P_m$. \end{lemma}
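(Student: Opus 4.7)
The plan is to directly bound the number of arithmetic operations and show each is a polynomial-time rational computation. The proof has two parts matching the two sentences of the lemma, and the second part follows from the first by a simple counting argument using the sparsity assumption on $P$.

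First I would count: the number of weighted distances to compute is exactly $|P|\cdot\sum_{i=1}^m |P_i|$, which is bounded above by $|P|\cdot m\cdot\max_i |P_i|$. Next, fix one such pair $(\x^j,\x_i)$ and analyze the cost of evaluating $\lambda_i\|\x^j-\x_i\|^2$. Since $\x^j,\x_i\in\Q^d$, the subtraction $\x^j-\x_i$ requires $d$ rational subtractions; computing $\|\x^j-\x_i\|^2$ requires $d$ rational multiplications and $d-1$ rational additions; and the final multiplication by $\lambda_i\in\Q$ is one rational multiplication. Each of these operations is performed on rationals whose encoding size is bounded by the encoding sizes of the input, so standard bounds on rational arithmetic guarantee each operation runs in time polynomial in the encoding sizes of the involved quantities, and the encoding size of the output $\lambda_i\|\x^j-\x_i\|^2$ is itself polynomial in the encoding sizes of $\lambda_i$, $\x^j$, and $\x_i$. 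Summing over all pairs yields a total running time polynomial in the encoding sizes of $P$, $P_1,\ldots,P_m$, and $\lambda$; since by assumption the encoding size is dominated by that of the measures, this is polynomial in the encoding sizes of $P$ and $P_1,\ldots,P_m$, proving the first statement.

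For the second statement, assume $P$ has at most $N$ support points and encoding size polynomial in $L$. Then $|P|\le N\le \sum_{i=1}^m|P_i|-m+1$ is polynomial in $L$, and every individual support point $\x^j$ of $P$ has encoding size bounded by the encoding size of $P$, hence polynomial in $L$. Substituting these bounds into the analysis above, every factor in the running time estimate is polynomial in $L$, so the total computation is efficient relative to $P_1,\ldots,P_m$ alone.

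I do not expect any real obstacle here; the statement is essentially a bookkeeping exercise in rational arithmetic, and the only subtle point is to invoke the standing assumption that the encoding size of the SBP/SCMP input is dominated by that of the measures themselves (so that $\lambda$ and $N$ do not secretly blow up the bound). The substantive content of the lemma is really its role as a preparatory step: it shows that the \emph{objective} of the transport LP of \cite{abm-16} can be written down efficiently, which is exactly what is needed in the discussion following the lemma about verifying optimality and non-mass-splitting of a candidate barycenter.
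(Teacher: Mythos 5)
Your proposal is correct and follows essentially the same approach as the paper's proof: count the at most $|P|\cdot mn$ weighted distance calculations, observe each is a constant number of rational arithmetic operations on rationals of bounded encoding size, and invoke the sparsity and encoding-size assumptions for the second claim. The paper's version is terser (it does not itemize the per-distance operations), but the argument is the same.
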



\begin{proof}
We first note that because $P$ has support points $\x^j \in \Q^d$, the weighted distances (see (\ref{eqn:weighteddist})) are in $\Q$ due to the use of the squared Euclidean distance. 

For each support point of $P$, the total number of weighted distance calculations is the total number of support points of $P_1,\ldots, P_m$: at most $mn$. Therefore there are at most $|P| \cdot mn$ weighted distance calculations, which is polynomial with respect to the size of $P$. 

When $P$ is sparse, that is, has at most $N$ support points, the number of required weighted distance calculations $Nmn$ is polynomial relative to $P_1, \ldots, P_m$. 
 \end{proof}

Once the weighted distances have been computed, we have the necessary information to set up the LP of \cite{abm-16}. Under the assumption that the measure has already been verified to be sparse, the resulting LP is polynomially-sized and  produces an optimal transport plan. 


\begin{theorem}\label{thm:opttrans} Let $P_1, \ldots, P_m$ be measures with support points in $\Q^d$, masses in $\Q$, and maximum encoding size $L$. Let a corresponding weight vector $\lambda$ be in $\Q^m$. Let $P$ be a probability measure with support points in $\Q^d$ and masses in $\Q$. 

By solving a polynomially-sized linear program, an optimal transport plan for $P$ can be computed in strongly polynomial time with respect to the size of $P$. If $P$ has at most $N$ support points and an encoding size polynomial in $L$, an optimal transport plan can be computed efficiently with respect to the size of $P_1, \ldots, P_m$. \end{theorem}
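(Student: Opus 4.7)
The plan is to invoke the linear program of \cite{abm-16} that computes an optimal transport from a \emph{fixed} measure $P$ to the measures $P_1,\ldots,P_m$, and observe that, once the support points of $P$ are fixed, the problem decouples into $m$ classical (two-marginal) transportation LPs. Each such LP has at most $|P|\cdot|P_i|$ variables (one for each pair of support points of $P$ and $P_i$) and at most $|P|+|P_i|$ mass-balance constraints, so its size is polynomial in the encoding sizes of $P$ and $P_i$. The cost coefficients of these LPs are exactly the weighted squared distances $\lambda_i\|\x^j-\x_i\|^2$, which by Lemma \ref{lm:sparsec} can be computed in time polynomial in the sizes of $P$ and $P_1,\ldots,P_m$ (and, if $|P|\le N$ and $P$ has encoding size polynomial in $L$, efficiently relative only to $P_1,\ldots,P_m$).

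Next, I would appeal to the fact that a classical transportation problem with rational data can be solved in strongly polynomial time, for example by Orlin's minimum-cost flow algorithm or Tardos' strongly polynomial algorithm for combinatorial LPs. This yields an optimal primal basic feasible solution, and concatenating the $m$ resulting transport plans gives an optimal transport plan for $P$ to $P_1,\ldots,P_m$ whose total cost equals $\phi(P)$. The overall running time is polynomial in $|P|$ and in the sizes of $P_1,\ldots,P_m$ and is strongly polynomial in the sense that the number of arithmetic operations does not depend on the bit-lengths of the rationals involved. For the second statement, when $P$ is guaranteed to have at most $N$ support points and encoding size polynomial in $L$, the number of variables and constraints in each of the $m$ transportation LPs is polynomial in $L$, so the whole procedure runs in time polynomial in the encoding size of $P_1,\ldots,P_m$.

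The main technical point, and the only potential obstacle, is to be careful about which LP of \cite{abm-16} one invokes. The LP used to search for a barycenter across \emph{all} possible support locations scales with $|S^*|$ or $|S|$ and is therefore of exponential size in general; the argument above instead uses the LP with support points of $P$ fixed, which is merely a disjoint union of $m$ transportation problems and is thus polynomially sized in the encoding of $P$ and $P_1,\ldots,P_m$. Once this distinction is made, the remainder of the proof is a direct combination of Lemma \ref{lm:sparsec} (to produce the cost coefficients) with an off-the-shelf strongly polynomial algorithm for the transportation problem.
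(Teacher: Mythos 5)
Your proposal is correct and follows essentially the same route as the paper: both fix the support of $P$, invoke the polynomially-sized LP of \cite{abm-16} (rather than the exponential one over $S^*$ or $S$), use Lemma \ref{lm:sparsec} to produce the rational cost coefficients, and conclude strong polynomiality from the $0$--$1$ transportation structure via Tardos' result. Your explicit observation that the LP decouples into $m$ independent two-marginal transportation problems is a harmless refinement of the paper's single block-diagonal LP with $|P|\cdot mn$ variables and $|P|+mn$ constraints, and does not change the substance of the argument.
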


\begin{proof}
The program formulation of \cite{abm-16} requires at most $|P| \cdot mn$ variables and $|P|+mn$ constraints. Therefore the linear program is polynomial in size with respect to the size of $P$. 

The coefficients of the objective function of the linear program of \cite{abm-16} are the weighted distances from the support points of $P$ to each of the support points of $P_1, \ldots, P_m$. By Lemma \ref{lm:sparsec}, these costs can be computed efficiently. So the objective function can be constructed in a polynomial number of operations relative to the size of $P$. 

Furthermore, all constants in the program are rational, including those appearing in the constraints, since the masses of $P$ are assumed to be rational. The resulting linear program can be solved in strongly polynomial time due to the famous result in \cite{t-86} and the fact that all coefficients in the constraint matrix are $0$ or $1$. 

When $P$ is sparse, that is, has at most $N$ support points, the number of weighted distances and variables is $kmn$, and the number of constraints is $k+mn$. When the encoding size of $P$ is polynomial in $L$, specifically, the encoding size of the rational constants in the constraints of the LP, we produce an LP solvable efficiently relative to $P_1, \ldots, P_m$. 
\end{proof}

If the computed optimal transport plan for a provided measure is mass-splitting, the measure is not a barycenter. The verification of non-mass-split requires a simple examination of the optimal values of the $|P| \cdot mn$ variables, so such a verification is efficient relative to $P_1, \ldots, P_m$. Theorem \ref{thm:opttrans} also gives that for a barycenter with masses in $\Q$, it is efficient to transform between representations $P$ and $P^*$ relative to the barycenter.

\begin{lemma}
Let $P_1, \ldots, P_m$ be measures with support points in $\Q^d$, masses in $\Q$, and maximum encoding size $L$. Let  $\lambda$ be a corresponding weight vector in $\Q^m$. Let $P$ and $P^*$ be representations of the same barycenter for measures $P_1, \ldots, P_m$ with masses in $\Q$. 

The transformation between representations $P$ and $P^*$ is efficient relative to $P$ and $P^*$. When the represented barycenter has at most $N$ support points and encoding size polynomial in $L$, the transformation is efficient relative to $P_1, \ldots, P_m$.
\end{lemma}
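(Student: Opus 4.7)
The plan is to handle the two directions of the transformation separately, with the observation that going from $P^*$ to $P$ is essentially a direct computation, while going from $P$ to $P^*$ requires reconstructing an (implicit) transport plan and so will invoke Theorem \ref{thm:opttrans}.

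First I would treat the direction $P^* \to P$. Given $P^*$ as a list of tuples $s_j = (\x_1^j, \ldots, \x_m^j)$ with corresponding masses $w_j$, I compute, for each $j$, the weighted mean $\x^j = \sum_{i=1}^m \lambda_i \x_i^j$, which lies in $\Q^d$ because $\lambda$ and the $\x_i^j$ are rational. The pair $(\x^j, w_j)$ yields an entry of $P$; if two tuples produce the same weighted mean, their masses are aggregated. Each weighted mean is a rational linear combination of $m$ vectors whose encoding sizes are bounded by $L$ and by the encoding size of $P^*$, so this step is polynomial in the size of $P$, $P^*$, $\lambda$, and $P_1,\ldots,P_m$, and a fortiori efficient relative to $P$ and $P^*$. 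When $|P^*| \leq N$ and the encoding size of $P^*$ is polynomial in $L$, this is efficient relative to $P_1,\ldots,P_m$.

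The more delicate direction is $P \to P^*$. Here $P$ is presented only as support points with masses, and the tuple structure that constitutes $P^*$ is encoded in the (implicit) optimal transport plan. Since $P$ is the representation of a barycenter, the non-mass-splitting property guarantees the existence of an optimal transport plan in which each support point of $P$ sends its full mass to exactly one support point in each $P_i$. I would invoke Theorem \ref{thm:opttrans} to compute an optimal transport plan by solving the LP of \cite{abm-16}, choosing a vertex (basic) optimal solution so that the solution is extremal; such an extremal optimum must coincide with a non-mass-splitting transport, because otherwise one could convex-combine it with the (equally optimal) non-mass-splitting transport, contradicting extremality. From this plan, the tuple $s_j$ associated with each support point $\x^j$ of $P$ is read off by collecting, for each $i = 1, \ldots, m$, the unique $\x_i^j \in \supp(P_i)$ to which $\x^j$ transports mass; the associated weight $w_j$ is the mass of $\x^j$ in $P$. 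This extraction is a simple scan of the LP solution, polynomial in the size of the LP.

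For complexity, Theorem \ref{thm:opttrans} states that the LP has polynomial size in $|P|$ and can be solved in strongly polynomial time using the result of \cite{t-86}, so the whole reconstruction is efficient relative to $P$ (and $P_1,\ldots,P_m$). If in addition $|P| \leq N$ and the encoding size of $P$ is polynomial in $L$, the same theorem gives that the LP can be solved efficiently relative to $P_1,\ldots,P_m$, so the transformation is efficient relative to $P_1,\ldots,P_m$ as well. The main subtlety is the argument that a vertex of the transport polytope yields a non-mass-splitting plan whenever one exists at optimality; this is where one has to be careful, but it follows from standard convexity reasoning once the existence of a non-mass-splitting optimum for a barycenter is invoked.
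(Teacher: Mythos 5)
Your overall route is the same as the paper's: the $P^*\to P$ direction is a direct computation of rational weighted means, and the $P\to P^*$ direction is handled by computing an optimal transport plan via Theorem \ref{thm:opttrans} and reading off the tuples, with the stated complexity bounds following from that theorem. The one place where your argument goes wrong is the justification that the computed plan is non-mass-splitting. Your claim that a vertex (basic) optimal solution ``must coincide with a non-mass-splitting transport, because otherwise one could convex-combine it with the (equally optimal) non-mass-splitting transport, contradicting extremality'' is not a valid argument: a vertex of the optimal face is an \emph{endpoint} of any segment joining it to another optimal point, so the existence of a second optimal solution does not contradict its extremality, and nothing in this reasoning forces the vertex to equal the non-mass-splitting plan. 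In general, selecting a basic optimal solution does not by itself rule out mass splitting.

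Fortunately, the step you are trying to establish is true for a simpler reason, which is the one the paper uses: because $P$ is a \emph{barycenter}, \emph{every} optimal transport plan from $P$ to $P_1,\ldots,P_m$ is non-mass-splitting (this is the non-mass-splitting property from \cite{abm-16}; if an optimal plan split the mass of some support point of $P$ among several tuples, relocating the split portions to the weighted means of their respective tuples would strictly decrease $\phi$, contradicting optimality of $P$). Hence whatever optimal solution the LP of Theorem \ref{thm:opttrans} returns is automatically non-mass-splitting, no vertex selection or convexity argument is needed, and the rest of your extraction and complexity analysis goes through as written. Replace the extremality argument with this observation and your proof matches the paper's.
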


\begin{proof}
Suppose $P$ and $P^*$ represent a barycenter with masses in $\Q$. We can transform $P^*$ to $P$ efficiently relative to $P^*$, so suppose we wish to transform $P$ into $P^*$. It is known that all support points are weighted means of points from $P_1, \ldots, P_m$ and thus necessarily rational. Therefore an optimal transport plan may be computed efficiently by Theorem \ref{thm:opttrans}. Further, all optimal transport plans for a barycenter are non-mass-splitting, so the computed transport plan is necessarily non-mass-splitting. Therefore, Theorem \ref{thm:opttrans} implies that it is efficient to transform $P$ to $P^*$ if $P$ is a barycenter with rational masses. 

Thus, it is efficient to transform between representations for a barycenter with respect to the size of the barycenter. By the same arguments, $P$ to $P^*$ is efficient relative to $P_1, \ldots, P_m$ when the sparsity and encoding size requirements of Theorem \ref{thm:opttrans} are satisfied. 
 \end{proof}

Thus when verifying a solution to SBP, a barycenter may be provided in either representation. This does not apply to SCMP: the ability to efficiently compute an optimal transport to $P_1,\ldots, P_m$ does not imply that the transformation of representation $P$ to representation $P^*$ is efficient for {\em arbitrary} measures $P$: even if there exists an optimal non-mass-splitting transport from $P$ to $P_1, \ldots, P_m$, the optimal transport that is computed may still be mass-splitting when $P$ is not a barycenter; even if all the support points of $P$ are weighted means in $S$, without a non-mass-splitting transport plan the measure cannot be efficiently represented as a set of tuples. Thus, for verifying possible solutions to SCMP, measures must be provided with a non-mass-splitting transport plan. 

\section{Further Open Questions}\label{sec:openQ}

The computational complexity of the discrete barycenter problem (DBP) remains open. Of course, the complexity of SBP -- which we studied in this paper -- only provides an upper bound on the complexity of DBP. Our proofs crucially build on the desired sparsity of the output, and thus do not reveal an immediate approach to the more general question. 

At the same time, there are some positive results about the efficient approximability of DBP. Notably, there exists a strongly polynomial $2$-approximation algorithm, and one can even guarantee a support size of at most $(\sum_{i=1}^m |P_i| -m+1)^2$, as well as the existence of a non-mass-splitting transport, for the measure giving this approximation guarantee \cite{b-17}. Other algorithms with approximation guarantees are based on an entropic regularization of the exact Wasserstein distance, which makes the underlying optimization problem strictly convex. The cost of an entropy-regularized transport plan converges to an optimal transport based on the exact Wasserstein distance in $O(e^{-w})$ \cite{bccnp-14,cdps-17,lrpc-18}, where $w$ is the regularization factor. These approaches typically require the specification of a set of possible support points for the measure to be found. For example, for grid-structured data, it is common practice to specify the original grid to obtain fast computation times; however, an exact barycenter would be supported in an $m$-times finer grid. A computation over the original grid, in fact, only leads to a convergence to a $2$-approximate solution as in \cite{b-17}. To the best of our knowledge, it is open whether entropy-regularization-based methods can be used to find an arbitrarily close approximation of an exact barycenter without specifying the possibly exponential-sized set $S$ of unique weighted means.

Second, the hardness of SCMP remains open if we do not restrict the decision to measures with a non-mass-splitting optimal transport, i.e., if one would decide {\em `Does there exist a measure $P$ with a support set $\supp(P)$ of size at most $N$ and transport cost $\phi(P)\leq \Phi$?'}. For our proof of NP-hardness of UC3P and the complexity of the related problems in this paper, it was crucial to know that mass can only be allocated to combinations in $S^*$, which is readily implied by the existence of a non-mass-splitting transport.

When considering measures that are not barycenters, the situation becomes more involved: For any measure where an optimal transport has to split mass, one can efficiently construct another measure with associated non-mass-splitting transport of strictly better cost (Alg. 2 in \cite{b-17}), but doing so results in a measure with a larger support set. Thus there is a tradeoff between the allowed size of the support set and the transport cost in SCMP. For a given, fixed combination of support size bound $N$ and transport cost bound $\Phi$, the only measure associated to a positive decision may require a mass-split in any optimal transport plan. \\

\bibliography{barycenters_literature}
\bibliographystyle{plain}

\end{document}